\newtheorem{theorem}{Theorem}[section] 
\newtheorem{lemma}[theorem]{Lemma}     
\newtheorem{corollary}[theorem]{Corollary}
\newtheorem{construction}[theorem]{Construction}
\newtheorem{definition}[theorem]{Definition}
\newtheorem{remark}[theorem]{Remark}
\newcommand{\PG}{\mathsf{PG}}
\newcommand{\GF}{\mathsf{GF}}
\newcommand{\Knarr}{\mathcal{K}}
\newcommand{\half}{\tfrac{1}{2}}
\newcommand{\scalarproduct}{\bullet}
\renewcommand{\le}{\leqslant}
\title{Every flock generalised quadrangle has a hemisystem}
\author{John Bamberg, Michael Giudici and Gordon F. Royle}
\subjclass[2000]{05B25 (primary), 05E30, 51E12 (secondary)}
\begin{document}
\maketitle

\begin{abstract}
We prove that every flock generalised quadrangle contains a hemisystem, and we provide a
construction method which unifies our results with the examples of Cossidente and Penttila
in the classical case.
\end{abstract}

%
%

\section{Introduction}\label{section:intro}

In 1965, Segre \cite{Segre65} introduced the notion of a \textit{hemisystem}, that is a
set of lines which contains exactly half the lines on each point, in his work on
\textit{regular systems} of the Hermitian surface.  Segre proved that there is just one
hemisystem of lines (up to equivalence) of the classical generalised quadrangle
$\mathsf{H}(3,3^2)$, and thirty years later, it was conjectured by J. A. Thas
\cite[pp. 333]{Thas95} that there are no hemisystems of $\mathsf{H}(3,q^2)$ for $q>3$.
However, forty years after Segre's seminal paper, Cossidente and Penttila
\cite{CossidentePenttila05} constructed, for each odd prime power $q$, a hemisystem of
$\mathsf{H}(3,q^2)$. Since this work, hemisystems have received renewed attention in
finite geometry (see \cite[\S 4]{Thas07}).  The concept of a hemisystem was extended by
Cameron, Delsarte and Goethals \cite{CameronDelsarteGoethals79} to all generalised
quadrangles with the same parameters $(s^2,s)$ as the Hermitian surface, and in addition,
they showed that such an object gives rise to a \textit{partial quadrangle}, and hence
also, to a strongly regular graph.

The only known generalised quadrangles of order $(s^2,s)$, $s$ odd, are the \textit{flock
  generalised quadrangles}, and we know that for $s$ prime, these are the only elation generalised
quadrangles with these parameters (see \cite{BambergPenttilaSchneider}). There are nine
known infinite families of such generalised quadrangles, and at least forty other examples
for $q\le 61$ (see \cite{Betten}).  As far as the authors are aware, the known hemisystems
of these generalised quadrangles are listed in \cite{BambergDeClerckDurante09},
\cite{BKLP07}, \cite{CossidentePenttila09} and \cite{CossidentePenttila05}.  In
particular, only one hemisystem of a nonclassical flock generalised quadrangle has
previously been constructed (in \cite{BambergDeClerckDurante09}). It is the main purpose
of this work to establish the following:

\begin{theorem}\label{thm:intro}
Every flock generalised quadrangle of order $(s^2,s)$, $s$ odd, contains a hemisystem.
\end{theorem}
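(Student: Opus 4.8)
The plan is to present the flock generalised quadrangle in a concrete model, carve out a candidate hemisystem by means of the square/non-square dichotomy of the field $\GF(q)$, and verify the balance condition orbit by orbit under the elation group, the arithmetic ultimately being controlled by the defining identity of the underlying $q$-clan (equivalently, of the associated BLT-set). Write $q=s$. Since $q$ is odd, $\mathcal{S}$ is the generalised quadrangle $\mathcal{S}(\mathcal{C})$ built from a $q$-clan $\mathcal{C}=\{A_t:t\in\GF(q)\}$, and by the Bader--Lunardon--Thas correspondence $\mathcal{C}$ also yields a BLT-set $\Knarr$ of $\mathsf{Q}(4,q)$, on which Knarr's construction of $\mathcal{S}$ can be based. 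I would fix the usual coordinates: a distinguished elation point $(\infty)$ lying on the $q+1$ lines $[A_t]$, $t\in\GF(q)\cup\{\infty\}$; an elation group $G$ of order $q^5$ fixing $(\infty)$ linewise and acting regularly on the points opposite $(\infty)$; and the remaining lines realised as cosets of the subgroups $A_t\le G$. The advantage of this set-up is that a hemisystem can be specified by a single rule in terms of $\mathcal{C}$, which in the classical case ($A_t=\operatorname{diag}(t,-nt)$ with $n$ a fixed non-square) ought to reproduce the Cossidente--Penttila hemisystem of $\mathsf{H}(3,q^2)$, yielding the unification advertised in the abstract.

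Next I would define the candidate $H$. To each line $\ell$ of $\mathcal{S}$ not through $(\infty)$ I attach an invariant $\lambda(\ell)\in\GF(q)$ — the value at $\ell$ of a suitable quadratic function built from $\mathcal{C}$ — and declare $\ell\in H$ exactly when $\lambda(\ell)$ is a non-zero square; the first items to settle are that $\lambda$ is well defined (independent of the coset representative and of the normalisation of $\mathcal{C}$, for which it suffices that $\lambda$ be determined up to a non-zero square factor), and that the selected $\lambda$ is a correct one, which is itself part of the work since only a carefully chosen invariant will make every local count come out right. For the $q+1$ lines $[A_t]$ through $(\infty)$ I would not prescribe a separate rule but \emph{deduce} their membership from consistency: one shows that for each $t$ the number $n_t$ of lines of $H$, other than $[A_t]$ itself, through a point $P\neq(\infty)$ on $[A_t]$ is the same for all such $P$ and equals $\half(q-1)$ or $\half(q+1)$, and one then puts $[A_t]\in H$ precisely when $n_t=\half(q-1)$. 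Granting this, the balance condition holds automatically at every point collinear with $(\infty)$, and by transitivity of $G$ the only remaining verifications are at one point $g$ opposite $(\infty)$ and at $(\infty)$ itself.

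These come down to two counting assertions: (i) $\lambda$ is a non-zero square on exactly $\half(q+1)$ of the $q+1$ lines through a point opposite $(\infty)$; and (ii) $n_t=\half(q-1)$ for exactly $\half(q+1)$ of the indices $t$, so that $(\infty)$ too lies on $\half(q+1)$ lines of $H$. After the geometry is unravelled, each of these should reduce to evaluating a sum $\sum\chi(\,\cdot\,)$ of values of the quadratic character $\chi$ of $\GF(q)$ whose terms are polynomial expressions in the $q$-clan matrices; I expect the main obstacle to be showing that these character sums take the required value \emph{uniformly across all $q$-clans}, not merely for the classical one — and it is exactly here that the $q$-clan identity (equivalently, the BLT-set property that no point of $\mathsf{Q}(4,q)$ is collinear with three members of $\Knarr$) has to be invoked, together with the hypothesis that $q$ is odd, which is essential both because the square/non-square partition is trivial in even characteristic and because $\half(q\pm1)$ must be an integer. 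Also to be handled along the way is the consistency issue flagged above, namely that $n_t$ really does not depend on $P$, and the matching check that the globally deduced set of lines of $H$ through $(\infty)$ agrees with the count at $(\infty)$. Once these character sums are under control, combining the three local verifications establishes Theorem~\ref{thm:intro}.
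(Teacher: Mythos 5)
Your proposal is a programme rather than a proof: every step where the actual mathematical content of Theorem~\ref{thm:intro} lives is deferred. The central object, the invariant $\lambda$, is never defined --- you say only that it is ``a suitable quadratic function built from $\mathcal{C}$'' and that choosing it correctly ``is itself part of the work''. Likewise the two counting assertions (i) and (ii) are reduced to character sums that you ``expect'' to take the required value uniformly over all $q$-clans, with the $q$-clan identity invoked ``exactly here'' but never actually used. Since the existence of such an invariant with such uniform counting behaviour is essentially equivalent to the theorem itself, nothing has been proved. For comparison, the paper's construction does not select lines by a single quadratic-character rule on the coset geometry: it works in the Knarr model in $\mathsf{W}(5,q)$, fixes a totally isotropic line $\ell$ in $P^\perp$ disjoint from the BLT-set $\mathcal{O}$, partitions $\mathcal{O}$ into two halves via the relation $b\equiv_\ell b'$ iff $\mathrm{Tr}(b,\ell)\cap\mathrm{Tr}(b',\ell)=\varnothing$ (the proof that this is an equivalence relation with two equal classes is the real geometric input, via an isometry-type computation in the dual quadric $\mathsf{Q}(4,q)$), and then verifies the hemisystem property by a tactical-decomposition count (Theorem~\ref{theorem:construction}, Lemma~\ref{constructioncorollary2}, Theorem~\ref{maintheorem}). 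Your proposal contains no analogue of this partition or of the compatibility argument.

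There is also a step that is not merely missing but would fail as stated: the reduction ``by transitivity of $G$ the only remaining verifications are at one point $g$ opposite $(\infty)$''. For a check at one opposite point to propagate to all of them, the candidate set $H$ would have to be invariant under a subgroup of $G$ transitive on the $q^5$ points opposite $(\infty)$; since $G$ acts regularly there, that subgroup must be $G$ itself. But $G$ is transitive on the $q^3$ lines (cosets $A_tg$) meeting a fixed line $[A_t]$ away from $(\infty)$, so a $G$-invariant set of lines not through $(\infty)$ is a union of entire pencils --- and then any point of $[A_t]\setminus\{(\infty)\}$ lies on either $0$ or $q$ lines of $H$ besides possibly $[A_t]$, which is incompatible with the hemisystem condition. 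So either your $\lambda$ is $G$-invariant and no hemisystem can result, or it is not and the single-point verification is unjustified; in the latter case you must carry out the character-sum computation at every opposite point (or find the correct smaller symmetry group), which returns you to the unproved uniformity claim. The hypothesis of Theorem~\ref{thm:intro} is genuinely used in the paper through the geometry of $\equiv_\ell$, not through a character-sum identity for $q$-clans, and no argument in your proposal substitutes for it.
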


If $\mathcal{H}$ is a hemisystem of a generalised quadrangle of order $(s^2,s)$, then we
can build a partial quadrangle by taking the point-set to be $\mathcal{H}$, and the
line-set to be the points of the generalised quadrangle.  The point-graph of a partial
quadrangle is strongly regular and the small number of constructions of such graphs are
outlined later in this section.  The complement of a hemisystem is again a hemisystem,
however, the two strongly regular graphs arising may not necessarily be isomorphic (see
\cite{BambergDeClerckDurante09} for an example of such an instance).  A hemisystem also
gives rise to a cometric $4$-class association scheme that is not metric, which are indeed
rare in the literature (see \cite{MuzychukvanDam}).

We will explain precisely what the flock generalised quadrangles are in Section
\ref{section:background}, but we stress here that we rely heavily on the geometric model
of Knarr \cite{Knarr92}. This yields flock generalised quadrangles from substructures
known as \textit{BLT-sets} of the symplectic polar space
$\mathsf{W}(5,q)$\footnote{Strictly speaking, a BLT-set is a set of lines of
  $\mathsf{W}(3,q)$, but we can identify them with a set of planes incident with a point
  of $\mathsf{W}(5,q)$.}.  The linear BLT-set yields the classical flock generalised
quadrangle $\mathsf{H}(3,q^2)$ via this construction, and varying the BLT-set gives all
flock generalised quadrangles.  We interpret the Cossidente-Penttila hemisystems of
$\mathsf{H}(3,q^2)$ in this model, and show that analogously, varying the BLT-set produces
hemisystems of the resulting nonclassical flock generalised quadrangles.  Our construction
produces a considerable number of non-isomorphic hemisystems of flock generalised
quadrangles, but computer work (reported in \cite{BambergGiudiciRoyleprep}) shows that not
every hemisystem arises from this method.

As mentioned earlier, hemisystems produce partial quadrangles.  Partial quadrangles were
introduced by Peter J. Cameron \cite{Cameron75} as a geometry of points and lines such
that every two points are on at most one line, there are no triangles, every line has the
same number $s+1$ of points, every point is incident with the same number $t+1$ of lines,
and there is a constant $\mu$ such that for every pair of noncollinear points there are
precisely $\mu$ points collinear to both. It follows directly from these conditions that
the point graph of this geometry is strongly regular with parameters\footnote{We use the
  standard notation for strongly regular graphs. The parameters $(v,k,\lambda,\mu)$ give
  us the number of vertices $v$, the valency $k$, and the constants $\lambda$ and $\mu$
  are the number of vertices adjacent to a pair of adjacent or nonadjacent vertices,
  respectively.}
$$(1+s(t+1)(\mu+st)/\mu, s(t+1), s-1 , \mu).$$ In fact, generalised quadrangles themselves
are partial quadrangles with $\mu=t+1$, that is, they satisfy the extra condition that for
every point $P$ and line $\ell$ which are not incident, there is a unique point of $\ell$
collinear with $P$. The only known partial quadrangles which are not generalised
quadrangles are:
\begin{itemize}
\item one of the seven known triangle-free strongly regular graphs,
\item one of three exceptional examples arising from caps of projective spaces,
\item those which arise from removing a point and its incident lines from a
generalised quadrangle of order $(s,s^2)$, or lastly,
\item those which arise from a \textit{hemisystem} of a
generalised quadrangle of order $(s^2,s)$, $s$ odd.
\end{itemize}
A partial quadrangle arising from a hemisystem has parameters
$\mathsf{PQ}((s-1)/2, s^2, (s-1)^2/2)$.

Finally, in the Appendix, we provide a new proof that a hemisystem of a generalised
quadrangle of order $(s^2,s)$, $s$ odd, gives rise to a partial quadrangle. The method of
\cite{CameronDelsarteGoethals79} was to show that the adjacency matrix of the point graph
had three distinct eigenvalues, whereas we adopt a more direct approach and make use of
some basic algebraic combinatorics.

%
%

\section{BLT-sets and the Knarr model}\label{section:background}

\subsection{Polar spaces and generalised quadrangles}\label{polarspaces}

By a theorem of Jacques Tits, every finite polar space is either a generalised quadrangle
or arises from a finite vector space equipped with a non-degenerate reflexive sesquilinear
form or a quadratic form. By Witt's Theorem, the maximal totally isotropic subspaces have
the same algebraic dimension, which is called the \emph{rank} of the polar space.  We will
assume that the reader is familiar with the fundamental theory of polar spaces, and we
refer the reader to \cite{HirschfeldThas} for more details.  If we take the totally
isotropic subspaces containing a given subspace of a finite polar space, we obtain the
\textit{quotient} polar space of the same type but lesser rank.  We use projective
notation for polar spaces so that they differ from the standard notation for their
collineation groups. For example, the notation $\mathsf{W}(d-1,q)$ denotes the symplectic
polar space derived from the vector space $\GF(q)^d$ equipped with a non-degenerate
alternating form. Here is a summary of the notation we will use for polar spaces.

\begin{table}[H]
\begin{tabular}{ll | ll}
\hline
Polar Space&Notation & Polar Space&Notation\\
\hline
Symplectic&$\mathsf{W}(d-1,q)$, $d$ even& Orthogonal, elliptic&$\mathsf{Q}^-(d-1,q)$, $d$ even\\
Hermitian&$\mathsf{H}(d-1,q^2)$& Orthogonal, parabolic&$\mathsf{Q}(d-1,q)$, $d$ odd\\
&&Orthogonal, hyperbolic&$\mathsf{Q}^+(d-1,q)$, $d$ even\\
\hline
\end{tabular}
\medskip
\caption{Notation for the finite polar spaces.}
\end{table}

The set of zeros of a homogeneous quadratic equation $\sum a_{ij} X_iX_j=0$ in $n$
variables over $\GF(q)$ defines a \textit{quadric} of the projective space $\PG(n-1,q)$,
and we say that a subspace is \textit{singular} with respect to the quadric if all of its
points are in the quadric. The associated bilinear form for the quadratic form $Q$
defining the quadric is given by $B_Q(u,v)=Q(u+v)-Q(u)-Q(v)$. The \textit{radical} of
$B_Q$ is the set of vectors which are orthogonal to every other vector of the ambient
vector space, and if $0$ is the only zero of $Q$ in the radical of $B_Q$, then we say that
the quadric is \textit{non-singular} and \textit{singular} otherwise.  Non-singular
quadrics come in three different types depending on the dimension and Witt index of the
quadric. In $\PG(2m,q)$, there is only one non-singular quadric up to equivalence (of the
collineation group) and it is often denoted $\mathsf{Q}(2m,q)$; the \textit{parabolic
  quadric} of $\PG(2m,q)$ with Witt index $m$.  In odd dimension, we have the
\textit{elliptic} and \textit{hyperbolic} quadrics $\mathsf{Q}^-(2m+1,q)$ and
$\mathsf{Q}^+(2m+1,q)$ respectively, and they have Witt indices $m$ and $m+1$ (resp.).

The (thick) \emph{classical generalised quadrangles} are those which arise from equipping
a vector space with an alternating, quadratic or Hermitian form, and they are
$\mathsf{W}(3,q)$, $\mathsf{Q}(4,q)$, $\mathsf{H}(3,q^2)$, $\mathsf{Q}^-(5,q)$ and
$\mathsf{H}(4,q^2)$.  By taking the totally singular points and lines of the ambient
projective space, we obtain the points and lines for the given classical generalised
quadrangle.  For example, $\mathsf{W}(3,q)$ is the incidence structure of all points of
$\PG(3,q)$ and totally isotropic lines with respect to a null polarity\footnote{A
  \textit{polarity} $\rho$ of a projective space is an involutory inclusion-reversing
  permutation. Furthermore, $\rho$ is a \textit{null polarity} if every point $P$ of the
  projective space is incident with its image $P^\rho$.}, and is a generalised quadrangle
of order $(q,q)$. The point-line dual of $\mathsf{W}(3,q)$ is $\mathsf{Q}(4,q)$, the
parabolic quadric of $\PG(4,q)$, and is therefore a generalised quadrangle of order
$(q,q)$ (see \cite[3.2.1]{FGQ}).

\subsection{Flocks of quadratic cones and BLT-sets}\label{flocks}

A \textit{quadratic cone} of $\PG(3,q)$, $q$ odd, consists of the $q+1$ lines joining a
point $v$ (the \textit{vertex}) to the points of a conic of a hyperplane not on $v$,
together with the points lying on these lines.  A \textit{flock} of the quadratic cone
$\mathcal{C}$ with vertex $v$ in $\PG(3,q)$ is a partition of the points of
$\mathcal{C}\backslash\{v\}$ into conics. J. A. Thas \cite{Thas87} showed that a flock
gives rise to a generalised quadrangle of order $(q^2,q)$, which we call a \textit{flock
  generalised quadrangle}.  A \textit{BLT-set of lines} of $\mathsf{W}(3,q)$ is a set
$\mathcal{B}$ of $q+1$ lines of $\mathsf{W}(3,q)$ such that no line of $\mathsf{W}(3,q)$
is concurrent with more than two lines of $\mathcal{B}$.  Note that a line of
$\mathsf{W}(3,q)$ not in $\mathcal{B}$ must be concurrent with either $0$ or $2$ elements
of $\mathcal{B}$, so BLT-sets are sometimes known as \textit{$(0,2)$-sets}. In \cite{BLT},
it was shown that, for $q$ odd, a flock of a quadratic cone in $\PG(3,q)$ gives rise to a
BLT-set of lines of $\mathsf{W}(3,q)$. Conversely, a BLT-set gives rise to possibly many
flocks, however we only obtain one flock generalised quadrangle up to isomorphism (see
\cite{PayneRogers90}).

For $q$ odd, Knarr \cite{Knarr92} gave a direct geometric construction of a flock
generalised quadrangle from a BLT-set of lines of $\mathsf{W}(3,q)$.  If we were to
construct a flock generalised quadrangle (see below \S\ref{Knarr}) from a \textit{linear}
BLT-set of lines of $\mathsf{W}(3,q)$ (i.e., the $q+1$ lines obtained from field reduction
of a Baer subline), we would obtain a generalised quadrangle isomorphic to the classical
object $\mathsf{H}(3,q^2)$.  The BLT-sets of lines of $\mathsf{W}(3,q)$ have been
classified by Law and Penttila \cite{LawPenttila03} for prime powers $q$ at most $29$, and
this has recently been extended by Betten \cite{Betten} to $q\le 61$.

\subsection{The Knarr model}\label{Knarr}

We will require familiarity with the symplectic polar space $\mathsf{W}(5,q)$ of rank
$3$. This geometry simply arises from taking the one, two and three-dimensional vector
subspaces of $\GF(q)^6$ for which a given alternating bilinear form restricts to the zero
form (i.e., the \textit{totally isotropic} subspaces). For example, one can take this
alternating bilinear form to be defined by
$$\beta(x , y ) = x_1y_6-x_6y_1+x_2y_5-x_5y_2+x_3y_4-x_4y_3.$$ This bilinear form also
gives us a null polarity $\perp$ of the ambient projective space $\PG(5,q)$, defined by
$U\mapsto U^\perp := \{v\in \GF(q)^6\mid\beta( u,v ) =0\text{ for all } u\in U\}$.  We
will now spend some time revising the Knarr construction of a flock generalised quadrangle
as it is the model that we use in this paper to analyse hemisystems of flock generalised
quadrangles.  The ingredients of the Knarr construction are as follows:
\begin{itemize}
\item a null polarity $\perp$ of $\mathsf{PG}(5,q)$;
\item a point $P$ of $\mathsf{PG}(5,q)$;
\item a BLT-set of lines $\mathcal{O}$ of $\mathsf{W}(3, q)$.
\end{itemize}

Note that the totally isotropic lines and planes incident with $P$ yield the quotient
polar space $P^\perp/P$ isomorphic to $\mathsf{W}(3, q)$.  So we will abuse notation and
identify $\mathcal{O}$ with a set of totally isotropic planes on $P$.  Then we construct a
generalised quadrangle $\Knarr(\mathcal{O})$ from the objects in the table below and
incidence inherited from that of $\mathsf{PG}(5,q)$.

\begin{table}[H]
\begin{tabular}{lp{5.8cm}|lp{5.8cm}}
&Points && Lines \\ \hline (i) &points of $\mathsf{PG}(5,q)$ not in $P^\perp$&(a)& totally
  isotropic planes not on $P$ and meeting some element of $\mathcal{O}$ in
  a line \\ (ii) &lines not incident with $P$ but contained in some element of
  $\mathcal{O}$&(b)& elements of $\mathcal{O}$\\ (iii)& the point $P$&&\\ \hline \\
\end{tabular}
\end{table}

The point $P$ is the \textit{base point} of the flock generalised quadrangle. It turns out
(see Payne's article \cite[Proposition 3]{Payne96}, and in particular, \cite[Chapter
  5]{MaskaThesis}), that if the flock generalised quadrangle $\Knarr(\mathcal{O})$ is
nonclassical, then its automorphism group is of the form $(q-1)\cdot(E_q\rtimes K)$ where
$E_q$ is the Heisenberg group of order $q^5$ with a centre of order $q$, $K$ is the
stabiliser of the BLT-set $\mathcal{O}$ within $\mathsf{P\Gamma Sp}(4,q)$ and the cyclic
group of order $q-1$ on the bottom gives us the kernel of the action on the lines through
the base point; actually there is an exception in the case of the \textit{Kantor-Knuth}
flock generalised quadrangles in which the kernel could be two or four times larger.

\subsection{An equivalence relation on a BLT-set}

In a generalised quadrangle, the \textit{trace} of two noncollinear points $x$ and $y$ is
the set of all points collinear to both $x$ and $y$ (see \cite[pp. 2]{FGQ}), and it is
often denoted by $\{x,y\}^\perp$ or $\mathrm{Tr}(x,y)$.  Similarly, we can define the
trace $\mathrm{Tr}(\ell,m)$ of two disjoint lines $\ell$ and $m$ as the set of all lines
concurrent with both $\ell$ and $m$. So in $\mathsf{W}(3,q)$, the trace of any pair of
disjoint lines consists of $q+1$ lines. The following lemma follows from \S1.3.6 and
\S3.3.1 of \cite{FGQ} and will be fundamental in proving Lemma \ref{lemma:eqrelation}.

\begin{lemma}\label{grid0or2}
Let $\ell_1, \ell_2, \ell_3$ be three pairwise disjoint lines of $\mathsf{W}(3,q)$, $q$
odd. Then
 $$|\mathrm{Tr}(\ell_1,\ell_2)\cap\mathrm{Tr}(\ell_1,\ell_3)|\in\{0,2\}.$$
\end{lemma}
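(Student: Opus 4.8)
The plan is to work inside $\mathsf{W}(3,q)$ and exploit the fact that the trace $\mathrm{Tr}(\ell_1,\ell_2)$ of two disjoint lines, together with the ``reguli'' structure of $\PG(3,q)$, essentially lives inside a hyperbolic quadric. Concretely, three pairwise disjoint lines $\ell_1,\ell_2,\ell_3$ of $\PG(3,q)$ lie on a unique regulus $\mathcal{R}$, and the opposite regulus $\mathcal{R}'$ consists of precisely the $q+1$ transversal lines meeting all of $\ell_1,\ell_2,\ell_3$; the union $\mathcal{R}\cup\mathcal{R}'$ rules a hyperbolic quadric $\mathsf{Q}^+(3,q)$. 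First I would recall (from \S1.3.6 and \S3.3.1 of \cite{FGQ}, as flagged in the excerpt) that in $\mathsf{W}(3,q)$ with $q$ odd, the trace $\mathrm{Tr}(\ell_1,\ell_2)$ is itself a regulus, so that the two disjoint lines $\ell_1,\ell_2$ together with the $q+1$ lines of $\mathrm{Tr}(\ell_1,\ell_2)$ all lie on a hyperbolic quadric $\mathcal{Q}_{12}$ — this is where the hypothesis $q$ odd enters, guaranteeing the pair is regular. Similarly $\mathrm{Tr}(\ell_1,\ell_3)$ is a regulus spanning a hyperbolic quadric $\mathcal{Q}_{13}$, and both $\mathcal{Q}_{12}$ and $\mathcal{Q}_{13}$ contain the line $\ell_1$.

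Next I would analyse the intersection $\mathrm{Tr}(\ell_1,\ell_2)\cap\mathrm{Tr}(\ell_1,\ell_3)$. A line $m$ in this intersection is a transversal to $\ell_1,\ell_2$ and also to $\ell_1,\ell_3$; in particular $m\in\mathrm{Tr}(\ell_1,\ell_2)$ and $m$ meets $\ell_3$. Since $\mathrm{Tr}(\ell_1,\ell_2)$ is a regulus lying on $\mathcal{Q}_{12}$, the point $m\cap\ell_3$ lies on $\mathcal{Q}_{12}$; as this holds for every such $m$, all the points of $\ell_3$ that can be ``reached'' lie in $\ell_3\cap\mathcal{Q}_{12}$. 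Now $\ell_3$ is not a line of $\mathcal{Q}_{12}$: indeed $\ell_3$ is disjoint from $\ell_1$ and $\ell_2$ which are in the same regulus on $\mathcal{Q}_{12}$, whereas any line of $\mathcal{Q}_{12}$ lies in one of the two reguli and hence meets one of $\ell_1,\ell_2$ unless it is in their regulus — but $\ell_3$ meets neither, so $\ell_3\not\subseteq\mathcal{Q}_{12}$. Therefore $|\ell_3\cap\mathcal{Q}_{12}|\in\{0,1,2\}$, a line meeting a quadric in $0$, $1$, or $2$ points (tangent lines giving the value $1$). Through each point of $\ell_3\cap\mathcal{Q}_{12}$ there is exactly one line of the regulus $\mathrm{Tr}(\ell_1,\ell_2)$, and conversely each $m$ in the intersection of the two traces determines a unique point of $\ell_3\cap\mathcal{Q}_{12}$; so $|\mathrm{Tr}(\ell_1,\ell_2)\cap\mathrm{Tr}(\ell_1,\ell_3)| = |\ell_3\cap\mathcal{Q}_{12}|\in\{0,1,2\}$.

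It remains to rule out the value $1$, i.e. the tangent case, and this I expect to be the main obstacle. The idea is that if $\ell_3$ were tangent to $\mathcal{Q}_{12}$ at a point $x$, then there is exactly one line $m$ of the regulus $\mathrm{Tr}(\ell_1,\ell_2)$ through $x$, and $m$ meets $\ell_3$ only at $x$; one then has to derive a contradiction with the $(0,2)$-type incidence behaviour forced by the symplectic form, or equivalently show that the three mutually disjoint symplectic lines cannot produce a tangency. I would argue via the null polarity $\perp$: the transversals common to $\ell_1,\ell_2,\ell_3$ in $\PG(3,q)$ form the opposite regulus $\mathcal{R}'$ of the regulus $\mathcal{R}$ through $\ell_1,\ell_2,\ell_3$, and the condition that a transversal be totally isotropic cuts $\mathcal{R}'$ down to those transversals lying in $\mathsf{W}(3,q)$; a parity/counting argument — for instance comparing $\ell_3\cap\mathcal{Q}_{12}$ with $\ell_3\cap\mathcal{Q}_{13}$ and using that both quadrics contain the regulus $\mathcal{R}$ through all three lines, or invoking directly the regularity statement from \cite[\S3.3.1]{FGQ} that traces in $\mathsf{W}(3,q)$, $q$ odd, behave like conics under section by an exterior line — forces the even value. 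Alternatively, since the excerpt explicitly permits citing \S1.3.6 and \S3.3.1 of \cite{FGQ}, the cleanest route is to quote the fact that $\mathsf{W}(3,q)$ with $q$ odd has the property that $\{x,y\}^{\perp\perp}$ has the same size as $\{x,y\}^\perp$ (``antiregular'' versus ``regular'' dichotomy), from which the sharper conclusion $\{0,2\}$, rather than $\{0,1,2\}$, drops out; I would present the tangent-exclusion step as the crux and flesh it out with the explicit conic computation if no clean citation covers it.
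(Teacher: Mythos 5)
Your reduction of the problem to counting $|\ell_3\cap\mathcal{Q}_{12}|$, where $\mathcal{Q}_{12}$ is the hyperbolic quadric carrying the regulus $\mathrm{Tr}(\ell_1,\ell_2)$, is a sound framework; note that $\mathrm{Tr}(\ell_1,\ell_2)$ is a regulus for \emph{every} $q$, since it consists of the lines $\langle x,\,x^\perp\cap\ell_2\rangle$ for $x\in\ell_1$, i.e.\ the lines joining corresponding points of a projectivity $\ell_1\to\ell_2$ -- so this is not where oddness of $q$ enters, contrary to your remark. (The paper itself gives no written proof, only the citation of \S1.3.6 and \S3.3.1 of \cite{FGQ}.) Your proposal has two genuine gaps, and they are exactly the two places where $q$ odd is needed. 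First, your argument that $\ell_3\not\subseteq\mathcal{Q}_{12}$ only excludes $\ell_3$ from the regulus $\mathrm{Tr}(\ell_1,\ell_2)$; it does not exclude $\ell_3$ from the \emph{opposite} regulus, the one containing $\ell_1$ and $\ell_2$, whose members are likewise disjoint from $\ell_1$ and $\ell_2$. That case must be ruled out, because there $\mathrm{Tr}(\ell_1,\ell_3)=\mathrm{Tr}(\ell_1,\ell_2)$ and the intersection has size $q+1$ -- and for $q$ even this really happens (all lines of $\mathsf{W}(3,q)$ are then regular and one may take $\ell_3\in\{\ell_1,\ell_2\}^{\perp\perp}$), so the lemma is false without oddness. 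What is needed is that for $q$ odd the lines of $\mathsf{W}(3,q)$ are \emph{antiregular}, i.e.\ $\{\ell_1,\ell_2\}^{\perp\perp}=\{\ell_1,\ell_2\}$; this also shows that the fact you propose to quote, that $\{x,y\}^{\perp\perp}$ has the same size as $\{x,y\}^\perp$, is the wrong one -- for $q$ odd the span has size $2$, not $q+1$.

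Second, and more seriously, the exclusion of the tangent case -- which you correctly identify as the crux -- is never carried out: you list several candidate strategies without executing any, and the lemma \emph{is} the assertion that the value $1$ cannot occur. The clean argument is the dual one, and it closes both gaps at once. In $\mathsf{Q}(4,q)$ the lines $\ell_1,\ell_2,\ell_3$ become pairwise noncollinear points $x_1,x_2,x_3$ of the quadric, and $\mathrm{Tr}(\ell_1,\ell_2)\cap\mathrm{Tr}(\ell_1,\ell_3)$ becomes $\{x_1,x_2,x_3\}^\perp=\mathsf{Q}(4,q)\cap L$ with $L=\langle x_1,x_2,x_3\rangle^\perp$. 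The plane $\pi=\langle x_1,x_2,x_3\rangle$ contains three pairwise noncollinear points of the quadric, so its section cannot be a point, a line or a line-pair and must be a conic; for $q$ odd the radical of a degenerate plane lies on the quadric, so $\pi$ is non-degenerate and $L=\pi^\perp$ is a non-degenerate line, hence external or secant (again using $q$ odd: a line meeting the quadric in exactly one point is one on which the restricted form is a scalar times the square of a linear form, i.e.\ a degenerate line). Therefore $|L\cap\mathsf{Q}(4,q)|\in\{0,2\}$. I would encourage you to rewrite the proof around this dual computation rather than around tangency to $\mathcal{Q}_{12}$.
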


Given a line which is not concurrent with any member of a BLT-set of lines $\mathcal{B}$
of $\mathsf{W}(3,q)$, we obtain an interesting equivalence relation on the elements of
$\mathcal{B}$ (Definition \ref{defn:eqrelation}), and in order to prove that we indeed
have such an equivalence relation, we resort to working in the dual situation in the
parabolic quadric $\mathsf{Q}(4,q)$.  The notional advantages of working in this setting
are the perceived ``extra'' geometric structures that we have at our disposal. This
duality arises from the Klein correspondence between the lines of $\mathsf{PG}(3,q)$ and
$\mathsf{Q^+}(5,q)$ (see \cite{Hirschfeld85}). For example, the dual of a regulus of
$\mathsf{W}(3,q)$ is a conic of $\mathsf{Q}(4,q)$, and in some sense, non-degenerate
subspaces are easier to work with than sets of lines. There are two isometry types for
non-degenerate lines with respect to a non-singular quadric $\mathcal{Q}$, namely, (i)
lines containing no points of $\mathcal{Q}$ (\textit{external} lines) and (ii) lines
containing two points of $\mathcal{Q}$ (\textit{secant} lines). The following lemma is
needed for the proof of Lemma \ref{lemma:eqrelation}.

\begin{lemma}\label{inthequadric}
Let $\mathcal{Q}$ be a non-singular quadric of $\PG(4,q)$, $q$ odd, and let $e$ be a line
of $\PG(4,q)$ external to $\mathcal{Q}$.  Take three distinct points $b$, $b'$ and $b''$
of the non-singular conic $e^\perp\cap\mathcal{Q}$ and define the following non-degenerate
planes:
$$\pi_1=\langle b,b'\rangle^\perp,\quad \pi_2=\langle b',b''\rangle^\perp,\quad
\pi_3=\langle b'',b\rangle^\perp.$$ Let $C$ be a degenerate hyperplane meeting $\pi_1$ in
a secant line. Then the non-degenerate lines $C\cap \pi_2$ and $C\cap \pi_3$ are of
different isometry types.
\end{lemma}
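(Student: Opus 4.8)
The plan is to make the computation manageable by choosing coordinates adapted to the external line $e$. Work in $\PG(4,q)$ with quadratic form $Q$; since $e$ is external to $\mathcal{Q}$, its restriction to the $2$-space $e$ is anisotropic, and I would split the underlying $5$-dimensional space as $e \perp e^\perp$, where $e^\perp$ is a $3$-space carrying a non-degenerate form whose associated quadric is the conic $\mathcal{O} := e^\perp \cap \mathcal{Q}$. Parametrise the three points $b, b', b''$ on this conic. Each plane $\pi_i$ is the perp of a secant line $\langle b, b'\rangle$ (note $b \ne b'$ lie on $\mathcal{Q}$, so the line joining them is secant; its perp is a non-degenerate plane because the conic is non-degenerate). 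A convenient way to track isometry type: a non-degenerate line $L$ in $\PG(4,q)$ is secant or external to $\mathcal{Q}$ according to whether the discriminant of $Q|_L$ is a square or a non-square times a fixed reference scalar — equivalently, whether $Q|_L$ represents $0$ nontrivially. So the whole statement reduces to comparing two discriminants, and I expect the product of the discriminants of $C\cap\pi_2$ and $C\cap\pi_3$ to come out to a non-square (hence the two lines have opposite type).

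The key steps, in order: (1) fix the coordinate splitting and an explicit parametrisation of $b,b',b''$ on the conic $\mathcal{O}$, writing each $\pi_i = \langle b_j,b_k\rangle^\perp$ explicitly as a plane through a point of $e^\perp$ (in fact, $\langle b,b'\rangle^\perp$ contains $e$'s "direction complementary" data — more precisely $\langle b,b'\rangle \subseteq e^\perp$, so its perp contains $e$). (2) Use the hypothesis that the degenerate hyperplane $C$ meets $\pi_1$ in a secant line: a degenerate hyperplane of $\PG(4,q)$ is the perp (with respect to the \emph{bilinear} form $B_Q$) of a point $r \in \mathcal{Q}$, namely $C = r^\perp$ with $r$ its nucleus/radical point, and $r \in C$. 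So $C = r^\perp$ for some singular point $r$, and "$C\cap\pi_1$ is secant" becomes a condition on $r$. (3) Compute $C\cap\pi_2 = r^\perp \cap \langle b',b''\rangle^\perp = \langle r, b', b''\rangle^\perp$ and similarly $C\cap\pi_3 = \langle r, b'', b\rangle^\perp$; these are lines, and I can read off quadratic forms on them. (4) Evaluate the two discriminants and show their ratio is a non-square, using that the three points $b,b',b''$ are distinct points of a conic (this distinctness is what forces the "generic" value) and the secancy condition from step (2) to pin down the sign coming from $r$.

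The main obstacle I anticipate is step (4): keeping the discriminant bookkeeping clean enough that the square/non-square comparison is transparent, and in particular seeing \emph{why} the secancy hypothesis on $C\cap\pi_1$ is exactly what is needed — it should enter as fixing one square-class ambiguity so that the remaining ambiguity between $\pi_2$ and $\pi_3$ is forced to be "odd". A cleaner route that avoids brute-force coordinates: pass through the Klein correspondence / the ambient context of Lemma~\ref{grid0or2}, interpreting the three planes $\pi_i$ dually as traces of lines in $\mathsf{W}(3,q)$ and invoking the $\{0,2\}$ intersection pattern — the parity inherent in "$0$ or $2$" is morally the source of the "different isometry types" conclusion. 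I would first try the direct discriminant computation, and if the sign-chasing becomes unwieldy, fall back on translating the configuration back to $\mathsf{W}(3,q)$ and using Lemma~\ref{grid0or2} together with the dictionary (external line $\leftrightarrow$ line disjoint from a regulus, secant line $\leftrightarrow$ concurrency pattern) to extract the result combinatorially.
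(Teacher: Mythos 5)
Your plan is essentially the paper's proof: the paper likewise fixes explicit coordinates (using transitivity of the stabiliser of the conic $e^\perp\cap\mathcal{Q}$ to normalise $b,b',b''$), identifies the degenerate hyperplane as $v^\perp$ for a singular point $v\in\mathcal{Q}$, translates the secancy of $C\cap\pi_1$ into a square condition on the coordinates of $v$, and then shows that the product of the discriminants of $C\cap\pi_2$ and $C\cap\pi_3$ is a fixed non-square times a square, hence the two lines have different isometry types. The only part you defer is the explicit determinant computation in your step (4), which is exactly the content of the paper's argument and does go through as you predict.
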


\begin{proof}
Suppose $\mathcal{Q}$ is defined by the quadratic form
$$Q(x)=x_1x_5+x_2x_4+x_3^2.$$ Without loss of generality, we may chose $e$ to be the line
$\langle (1,0,0,0,n), (0,0,1,0,0)\rangle$ where $n$ is some fixed non-square of $\GF(q)$,
and since the stabiliser of a conic in $\mathsf{PO}(5,q)$ is $3$-transitive on the conic,
we may suppose that $b$, $b'$ and $b''$ are the following points of
$e^\perp\cap\mathcal{Q}$:
$$b=(0,1,0,0,0),\quad b'=(0,0,0,1,0),\quad b''=(-1,1,0,n,n).$$
Therefore, we see that
$$\pi_1=\langle (1,0,0,0,0), e\rangle, \quad \pi_2=\langle (-1,0,0,n,0), e\rangle, \quad
\pi_3=\langle (-1,1,0,0,0), e\rangle.$$ Now a secant line of $\pi_1$ must have nonzero
projections to two of the first, third and fifth coordinates.  Let $v$ be a point of
$\mathcal{Q}$ not in $\pi_1$. Below we will identify lines with $2\times 5$ matrices,
whereby the rows of the matrix give us the row-space of the associated 2-dimensional
vector subspace.  Let $M$ be the Gram matrix for the bilinear form arising from $Q$.  Now
$v^\perp$ meets $\pi_1$ in a line $s_v$:
$$s_v=v^\perp\cap \pi_1=\left[\begin{smallmatrix}
-v_1&0&0&0&v_5\\
-2v_3&0&v_5&0&0
\end{smallmatrix}\right]$$
which is a secant if and only if $\det(s_vMs_v^T)$ is a square. We have
$$\det(s_vMs_v^T)=\det
\left[\begin{smallmatrix}
-2v_1v_5&-2v_3v_5\\
-2v_3v_5&2v_5^2
\end{smallmatrix}\right]=-4v_1v_5^3-4v_3^2v_5^2=-4v_5^2(v_1v_5+v_3^2)=4v_5^2(v_2v_4).$$
and hence $v_2v_4\in\square$. 
Now
$$v^\perp\cap \pi_2=\left[\begin{smallmatrix}
2v_3&0&-(nv_1+v_5)&0&2nv_3\\
-2v_3&0&-(nv_2-v_5)&2nv_3&0
\end{smallmatrix}\right]\quad\text{ and }\quad v^\perp\cap \pi_3=\left[\begin{smallmatrix}
nv_1+v_4&nv_1+v_5&0&0&n(v_4-v_5)\\
2v_3&-2v_3&v_4-v_5&0&0
\end{smallmatrix}\right].$$
We want to show that $\det((v^\perp\cap \pi_2)M(v^\perp\cap \pi_2)^T)$ and
$\det((v^\perp\cap \pi_3)M(v^\perp\cap \pi_3)^T)$ are not both square and not both
non-square.  These determinants turn out to be respectively:
\begin{align*}
\Delta_2&=16 n^2 v_3^2 (n v_1 v_2 + n v_2^2 - v_3^2 - v_1 v_5 - v_2 v_5)=16 n^2 v_3^2 v_2 (n (v_1  +  v_2) +v_4 -  v_5)\\
\Delta_3&=4 n (v_4 - v_5)^2 (-n (v_1v_5+v_3^2) + n v_1 v_4 + v_4^2  - v_4 v_5)
=4 n (v_4 - v_5)^2 v_4(n (v_1+  v_2)  + v_4  - v_5).
\end{align*}
So the product of $\Delta_2$ and $\Delta_3$ is
$$8^2 n^3 v_3^2 (v_4 - v_5)^2 (n (v_1 + v_2) +v_4 - v_5)^2(v_2v_4),$$ and since
$v_2v_4\in\square$, we see that $\Delta_2\Delta_3$ is simply a product of $n$ and a
square.  Therefore $\Delta_2\Delta_3$ is a non-square and hence the lines $v^\perp\cap
\pi_2$ and $v^\perp\cap \pi_3$ are of different isometry types with respect to the quadric
$\mathcal{Q}$; thus completing the proof.
\end{proof}

\begin{definition}\label{defn:eqrelation}
Let $\mathcal{B}$ be a BLT-set of lines of $\mathsf{W}(3,q)$ and let $\ell$ be a line of
$\mathsf{W}(3,q)$ disjoint from every member of $\mathcal{B}$.  Define a binary relation
$\equiv_\ell$ on $\mathcal{B}$ by setting $b\equiv_\ell b'$ if and only if
$$b=b'\text{  or   }\mathrm{Tr}(b,\ell)\cap\mathrm{Tr}(b',\ell)=\varnothing.$$
\end{definition}

\begin{lemma}\label{lemma:eqrelation}
The relation $\equiv_\ell$ given by Definition \ref{defn:eqrelation} is an equivalence
relation with two equivalence classes of equal size.
\end{lemma}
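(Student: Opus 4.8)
The strategy is to dualise via the Klein correspondence and work in $\mathsf{Q}(4,q)$, where the BLT-set $\mathcal{B}$ becomes a BLT-set of points (an ovoid-like configuration) and the disjoint line $\ell$ becomes an external line $e$, so that $e^\perp\cap\mathcal{Q}$ is a non-singular conic containing, in the dual, the information about which members of $\mathcal{B}$ the lines concurrent with $\ell$ meet. Concretely, for $b\in\mathcal{B}$ the trace $\mathrm{Tr}(b,\ell)$ dualises to a conic, and whether $\mathrm{Tr}(b,\ell)\cap\mathrm{Tr}(b',\ell)$ is empty or not is governed by the isometry type (secant versus external) of certain non-degenerate lines cut out inside the planes $\langle b,b'\rangle^\perp$. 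First I would set up this dictionary carefully: translate ``$\ell$ disjoint from every member of $\mathcal{B}$'' and ``BLT-set'' into the quadric picture, and identify the relevant conic $e^\perp\cap\mathcal{Q}$ and the role of degenerate hyperplanes $C$ through a point of $\ell$'s dual.

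The heart of the argument is Lemma \ref{inthequadric}: given three members $b,b',b''$ of $\mathcal{B}$ and the planes $\pi_1=\langle b,b'\rangle^\perp$, $\pi_2=\langle b',b''\rangle^\perp$, $\pi_3=\langle b'',b\rangle^\perp$, for any degenerate hyperplane $C$ meeting $\pi_1$ in a secant line, the lines $C\cap\pi_2$ and $C\cap\pi_3$ have opposite isometry types. Interpreting $C$ as coming from a point of $\mathrm{Tr}(b,\ell)$ (equivalently, from $\ell$), ``$C\cap\pi_2$ secant'' will correspond to ``$b''\in$ some trace intersection'' and similarly for $\pi_3$; Lemma \ref{inthequadric} then says that for the three pairs among $\{b,b',b''\}$, exactly two of the three pairwise trace-intersections are nonempty, i.e.\ the relation ``$\mathrm{Tr}(b,\ell)\cap\mathrm{Tr}(b',\ell)\neq\varnothing$'' cannot hold for all three pairs nor fail for all three pairs — it holds for exactly an odd number. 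This is precisely the combinatorial signature that $\equiv_\ell$ and its negation partition $\mathcal{B}$ into two classes.

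From here the proof is bookkeeping. Reflexivity of $\equiv_\ell$ is built into the definition, and symmetry is immediate. For transitivity, suppose $b\equiv_\ell b'$ and $b'\equiv_\ell b''$ with $b,b',b''$ distinct; then $\mathrm{Tr}(b,\ell)\cap\mathrm{Tr}(b',\ell)=\varnothing$ and $\mathrm{Tr}(b',\ell)\cap\mathrm{Tr}(b'',\ell)=\varnothing$, so two of the three pairwise intersections are empty, hence by Lemma \ref{inthequadric} the third must also be empty, giving $b\equiv_\ell b''$. (One should also use Lemma \ref{grid0or2} to know these trace-intersections have size $0$ or $2$, so ``nonempty'' is unambiguous and the parity count is clean.) For the number of classes: if there were a single class, then all pairwise trace-intersections would be empty, contradicting Lemma \ref{inthequadric} as soon as $|\mathcal{B}|\ge 3$; if there were three or more classes, pick representatives $b,b',b''$ from three distinct classes, so all three pairwise intersections are nonempty, again contradicting Lemma \ref{inthequadric}. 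Hence there are exactly two classes. For equal size, I would count incidences: the total number $N$ of lines of $\mathsf{W}(3,q)$ meeting $\ell$ and meeting some member of $\mathcal{B}$ can be computed in two ways, via $\sum_{b}|\mathrm{Tr}(b,\ell)|=(q+1)^2$ minus the overcounting from nonempty pairwise intersections (each contributing $2$), and that overcount equals $2\cdot(\text{number of cross-class pairs})$; balancing this against the BLT/$(0,2)$ property of the configuration forces the two classes to have sizes $\tfrac{q+1}{2}$ each. The main obstacle is making the correspondence ``point of $\ell$ $\leftrightarrow$ degenerate hyperplane $C$ meeting $\pi_1$ in a secant line'' precise enough that Lemma \ref{inthequadric} applies uniformly to all relevant $C$, and checking that the secant/external dichotomy there really does encode nonemptiness of the trace intersections rather than some twisted variant.
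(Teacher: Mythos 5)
Your overall strategy --- dualise to $\mathsf{Q}(4,q)$, use Lemma \ref{inthequadric} to get transitivity, and count to get the class sizes --- is exactly the paper's, and your transitivity step (in contrapositive form: if $\mathrm{Tr}(b,\ell)\cap\mathrm{Tr}(b'',\ell)\neq\varnothing$ then one of the other two pairwise intersections is nonempty) is correct. However, there is a genuine error in what you claim Lemma \ref{inthequadric} yields. You assert that among any three elements of $\mathcal{B}$ ``exactly two of the three pairwise trace-intersections are nonempty'', that the relation ``cannot fail for all three pairs'', and that it holds for an odd number of the pairs. All three statements are false. Lemma \ref{inthequadric} carries the hypothesis that $C$ meets $\pi_1$ in a \emph{secant} line, so it says nothing at all when every pairwise intersection is empty --- and that situation certainly occurs, namely for any three elements of a single equivalence class (available once $q\geq 5$). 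The correct consequence of the lemma is that the number of nonempty pairs among any three elements is $0$ or $2$, i.e.\ \emph{even}; note that this is what your own transitivity paragraph implicitly uses (``two empty forces the third empty''), so your stated parity claim contradicts your own argument. As a consequence, your proof that there cannot be a single equivalence class (``all pairwise intersections empty would contradict Lemma \ref{inthequadric}'') is invalid.

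The gap is closed by sharpening the counting, which is what the paper does. Fix $b\in\mathcal{B}$. Each of the $q+1$ lines of $\mathrm{Tr}(b,\ell)$ meets $\ell$, hence is not in $\mathcal{B}$, hence by the $(0,2)$-property is concurrent with exactly one element of $\mathcal{B}$ other than $b$; and by Lemma \ref{grid0or2} each such element $b'$ accounts for exactly two lines of $\mathrm{Tr}(b,\ell)$. So exactly $(q+1)/2$ elements of $\mathcal{B}$ are non-equivalent to $b$, i.e.\ every class has size $(q+1)/2$, and since $|\mathcal{B}|=q+1$ there are exactly two classes --- no separate argument excluding one class or three classes is needed (your global double-count, as phrased, already presupposes two classes). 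One further point on the dictionary: under the Klein correspondence the line $\ell$ dualises to a \emph{point} $\ell^D$ of $\mathsf{Q}(4,q)$, and the degenerate hyperplane $C$ of Lemma \ref{inthequadric} is $(\ell^D)^\perp$; the external line $e$ is not the dual of $\ell$ but the perp of the plane spanned by the duals of the three chosen BLT-set elements.
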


\begin{proof}
First note that $\equiv_\ell$ is trivially reflexive and symmetric, so we will prove that
$\equiv_\ell$ is transitive (by using the contrapositive statement of the standard
definition).  Suppose we have three elements $b,b',b''$ of $\mathcal{B}$ such that
$\mathrm{Tr}(b,\ell)\cap\mathrm{Tr}(b'',\ell)\ne \varnothing$, that is, there is some line
$m$ meeting $\ell$, $b$ and $b''$. To establish transitivity, we will show that
$\mathrm{Tr}(b',\ell)$ must meet either $\mathrm{Tr}(b,\ell)$ or
$\mathrm{Tr}(b'',\ell)$. Let us look at the dual situation in the parabolic quadric
$\mathsf{Q}(4,q)$, and write $x^D$ for the dual object corresponding to $x$.

\begin{figure}[H]
\centering
\includegraphics[scale=0.6]{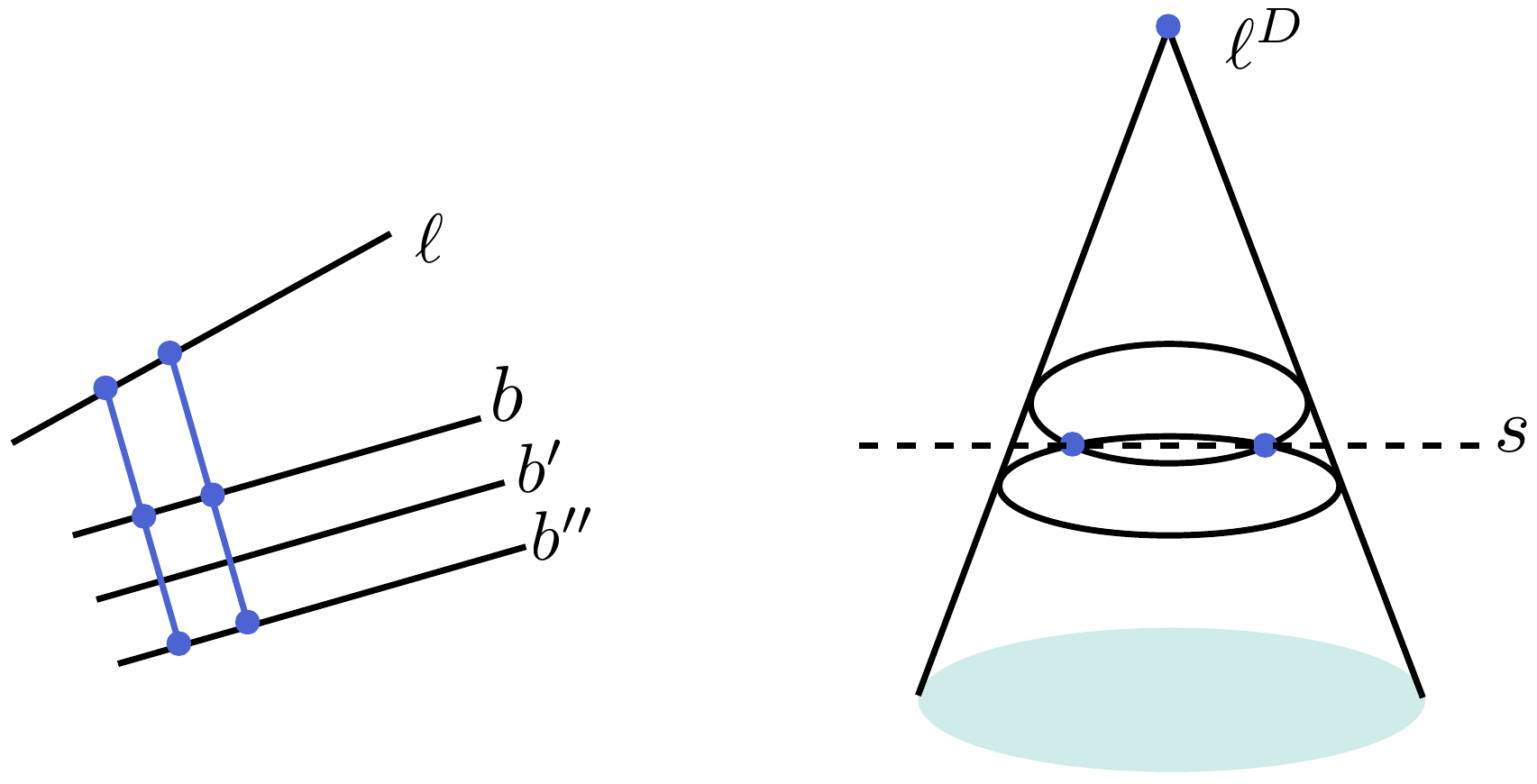}
\caption{The dual situation.}
\end{figure}

Then $\ell$ corresponds to a point $\ell^D$, $\mathrm{Tr}(b,\ell)$ and
$\mathrm{Tr}(b'',\ell)$ translate to conic sections of the cone $C$ on the perp of
$\ell^D$, and the two elements of $\mathrm{Tr}(b,\ell)\cap\mathrm{Tr}(b'',\ell)$ give us
the two points of the secant line $s$ to the conic given by the intersection of the perps
of the points $b^D$ and $(b'')^D$ dual to $b$ and $b''$. Let $\pi_2$ and $\pi_3$ be the
conics arising from taking the perps of the lines $\langle b^D, (b')^D\rangle$ and
$\langle (b')^D, (b'')^D\rangle$.  Then by Lemma \ref{inthequadric}, $C$ meets $\pi_2$ and
$\pi_3$ in non-degenerate lines of different types.  This means that the perp of $\ell^D$
must meet one of $\pi_2$ and $\pi_3$ in a secant line of $\mathsf{Q}(4,q)$, which is
equivalent to $\mathrm{Tr}(b',\ell)$ meeting either $\mathrm{Tr}(b,\ell)$ or
$\mathrm{Tr}(b'',\ell)$ in two elements.  Therefore $\equiv_\ell$ is transitive, and
hence, an equivalence relation.

Now for an arbitrary pair of distinct elements $b,b'\in\mathcal{B}$, we know from Lemma
\ref{grid0or2} that the cardinality of $\mathrm{Tr}(b,\ell)\cap\mathrm{Tr}(b',\ell)$ is
$0$ or $2$. So the number of elements of $\mathcal{B}$ which are not equivalent to $b$
under $\equiv_\ell$ is $(q+1)/2$. Therefore, the size of the equivalence class of $b$ is
also $(q+1)/2$ and we have completed the proof.
\end{proof}

%
%

\section{A construction of hemisystems of flock generalised quadrangles}\label{section:construction}

\begin{construction}\label{construction}
Consider a flock generalised quadrangle $\Knarr(\mathcal{O})$ in the Knarr model with base
point $P$ and null polarity $\perp$.  Fix a totally isotropic line $\ell$ in $P^\perp$
that is disjoint from every element of $\mathcal{O}$.  Suppose there is a partition of
$\mathcal{O}$ into two halves $\mathcal{O}^+$, $\mathcal{O}^-$.  For a subset
$\mathcal{S}$ of the totally isotropic planes on $\ell$, let
$\mathcal{L}^{\pm}_{\mathcal{S}}$ be the lines\footnote{Recall that the lines of
  $\Knarr(\mathcal{O})$ are each totally isotropic planes of $\mathsf{W}(5,q)$.} of
$\Knarr(\mathcal{O})$ that meet some element of $\mathcal{O}^{\pm}$ in a line, and which
meet some element of $\mathcal{S}$ in a point.  For any point $X\notin P^\perp$ and
$\pi\in\mathcal{O}$, define $X^\pi$ by
$$X^\pi:= \langle \ell,\ell^\perp\cap \langle X,X^\perp\cap\pi\rangle\rangle.$$ Note that
any $X^\pi$ is a totally isotropic plane on $\ell$. 
\end{construction}

\begin{theorem}\label{theorem:construction}
Let $\mathcal{S}$ be a subset of size $(q-1)/2$ of the totally isotropic planes on $\ell$
except $\langle P,\ell\rangle$, and let $\mathcal{S}^c$ be the complementary set of
$(q+1)/2$ planes on $\ell$ (except $\langle P,\ell\rangle$).  In Construction
\ref{construction}, the set
$$\mathcal{O}^+\cup \mathcal{L}^+_{\mathcal{S}}\cup \mathcal{L}^-_{\mathcal{S}^c}$$ of
lines of $\Knarr(\mathcal{O})$ is a hemisystem if and only if the following regularity
condition holds for any point $X\notin P^\perp, \ell^\perp$:
$$|\{ \pi\in \mathcal{O}^+ ~\mid~ X^\pi\in \mathcal{S}\}| = |\{ \pi\in \mathcal{O}^- ~\mid~
X^\pi\in \mathcal{S}\}|.$$
\end{theorem}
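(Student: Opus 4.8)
The plan is to count, for each point $Q$ of $\Knarr(\mathcal{O})$, the number of lines in the candidate set $\mathcal{H} := \mathcal{O}^+\cup \mathcal{L}^+_{\mathcal{S}}\cup \mathcal{L}^-_{\mathcal{S}^c}$ through $Q$, and show this equals exactly half of $s+1 = q^2+1$ (since $\Knarr(\mathcal{O})$ has order $(q^2,q)$, each point lies on $q^2+1$ lines), which forces $q$ odd — consistent with the hypotheses — and that this happens if and only if the stated regularity condition holds. The points of $\Knarr(\mathcal{O})$ come in three types (i), (ii), (iii) from the Knarr table, so I would split the verification into these cases. First I would handle the easy structural cases: for the base point $P$ (type (iii)), the lines through $P$ are exactly the elements of $\mathcal{O}$, and $\mathcal{H}$ contains precisely $\mathcal{O}^+$, which is half of $\mathcal{O}$ by the hypothesis that $\mathcal{O}^+$, $\mathcal{O}^-$ is a partition into two halves; this case needs no regularity condition. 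For a point of type (ii) — a line $m$ not through $P$ contained in some $\pi_0\in\mathcal{O}$ — the lines of $\Knarr(\mathcal{O})$ through it are $\pi_0$ itself (type (b)) together with the totally isotropic planes containing $m$ other than $\pi_0$ (type (a)); I would count how many of these lie in $\mathcal{H}$, using that such a plane meets $\pi_0\in\mathcal{O}$ in the line $m$ and meets $\ell$ in at most a point, and organise the count according to whether $\pi_0\in\mathcal{O}^+$ or $\mathcal{O}^-$ and whether the relevant plane on $\ell$ lies in $\mathcal{S}$ or $\mathcal{S}^c$.

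The substantive case is a point $X$ of type (i), i.e. $X\in\PG(5,q)\setminus P^\perp$; here I further subdivide according to whether $X\in\ell^\perp$ or not, matching the dichotomy in the theorem statement. The key geometric fact to establish is that the lines of $\Knarr(\mathcal{O})$ through $X$ are precisely the totally isotropic planes $\langle X, X^\perp\cap\pi\rangle$ as $\pi$ ranges over $\mathcal{O}$ (for each such $\pi$, $X^\perp\cap\pi$ is a line since $X\notin P^\perp\supseteq\pi^\perp$, so $\langle X,X^\perp\cap\pi\rangle$ is a plane, and it is totally isotropic because $X\in X^\perp$ and the other generators lie in $\pi\subseteq\pi^\perp$ — one checks $\beta$ vanishes), giving a bijection $\pi\mapsto \langle X, X^\perp\cap\pi\rangle$ between $\mathcal{O}$ and the $q+1$ lines of $\Knarr(\mathcal{O})$ through $X$. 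Then I would determine, for each $\pi\in\mathcal{O}$, whether the corresponding plane lies in $\mathcal{H}$: it meets $\pi\in\mathcal{O}^\pm$ in a line by construction, so it contributes to $\mathcal{L}^\pm$, and the membership in $\mathcal{S}$ versus $\mathcal{S}^c$ is governed by which plane on $\ell$ the line $\langle X, X^\perp\cap\pi\rangle$ meets — and this is exactly $X^\pi = \langle \ell, \ell^\perp\cap\langle X, X^\perp\cap\pi\rangle\rangle$ as defined in Construction \ref{construction} (here I must check that $X^\pi$ is well-defined as a plane on $\ell$, using $X\notin\ell^\perp$ when that holds, and reduces to $\langle P,\ell\rangle$ in a controlled way otherwise, which is why $\langle P,\ell\rangle$ is excluded from $\mathcal{S}\cup\mathcal{S}^c$).

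Assembling the type-(i) count: writing $a^+ = |\{\pi\in\mathcal{O}^+ : X^\pi\in\mathcal{S}\}|$, $a^- = |\{\pi\in\mathcal{O}^- : X^\pi\in\mathcal{S}\}|$, and noting $|\mathcal{O}^\pm| = (q+1)/2$, the number of lines of $\mathcal{H}$ through $X$ works out to $a^+ + ((q+1)/2 - a^-)$ plus the contribution from planes $X^\pi = \langle P,\ell\rangle$ (which belong to $\mathcal{O}$-planes not meeting $\ell$, handled separately) — and the total equals $(q^2+1)/2$ precisely when $a^+ = a^-$, i.e. the regularity condition. The bookkeeping about which planes $X^\pi$ equal $\langle P, \ell\rangle$, and reconciling the count for $X\in\ell^\perp$ (where many $X^\pi$ may coincide or degenerate) with the count for $X\notin\ell^\perp$, is where the main obstacle lies; I expect the $X\in\ell^\perp$ points to automatically give a hemisystem-balanced count without needing the regularity condition (which is why the theorem only imposes it for $X\notin P^\perp, \ell^\perp$), and verifying this cleanly — essentially showing the map $\pi\mapsto X^\pi$ is constant or has some forced symmetry on $\ell^\perp\setminus P^\perp$ — is the delicate point. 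The necessity direction is then immediate by reversing the count: if the hemisystem property holds then in particular the count at each type-(i) point $X\notin P^\perp,\ell^\perp$ equals $(q^2+1)/2$, which by the above identity forces $a^+ = a^-$.
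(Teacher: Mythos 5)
Your overall strategy---count, for each point, the lines of the candidate set through it, using the bijection $\pi\mapsto\langle X,X^\perp\cap\pi\rangle$ between $\mathcal{O}$ and the lines through a type-(i) point and the plane $X^\pi$ to decide membership in $\mathcal{S}$ versus $\mathcal{S}^c$---is exactly the paper's (the paper merely organises the same pointwise count as a tactical decomposition matrix). However, there is a concrete arithmetic error at the heart of your argument: in a generalised quadrangle of order $(q^2,q)$ each point lies on $t+1=q+1$ lines, not $s+1=q^2+1$, so a hemisystem contains $(q+1)/2$ lines on each point. You in fact use the correct figure later (you say the map $\pi\mapsto\langle X,X^\perp\cap\pi\rangle$ hits ``the $q+1$ lines of $\Knarr(\mathcal{O})$ through $X$''), which makes the proposal internally inconsistent. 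As written, your final step fails: the count $a^+ + \bigl((q+1)/2 - a^-\bigr)$ can never equal $(q^2+1)/2$ for $q\ge 3$, since $a^\pm\le (q+1)/2$; setting the target to the correct value $(q+1)/2$ is what yields the equivalence with $a^+=a^-$. This is a repairable slip, but the displayed conclusion is false until it is repaired.

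Two further points. First, your worry about planes $X^\pi=\langle P,\ell\rangle$ is moot: for $X\notin P^\perp,\ell^\perp$ and $\pi\in\mathcal{O}$ one has $\pi\cap\langle P,\ell\rangle=\{P\}$ (since $\ell$ misses $\pi$), while the point $\ell^\perp\cap\langle X,X^\perp\cap\pi\rangle$ lies in $X^\perp\cap\pi$, which avoids $P$; hence $X^\pi\ne\langle P,\ell\rangle$ and $a+a^c=q+1$ with no correction term. Second, the type-(ii) case and the case $X\in\ell^\perp$ are only promised, not carried out, and they are not entirely free: one must check that a type-(ii) point in an element of $\mathcal{O}^+$ lies on exactly $1+(q-1)/2$ lines of the candidate set (its $q$ type-(a) lines distributing one to each plane on $\ell$ other than $\langle P,\ell\rangle$), and that a type-(i) point of $\ell^\perp$ lying in an element of $\mathcal{S}$ has all $q+1$ of its lines in $\mathcal{L}^{\pm}_{\mathcal{S}}$, split evenly between $+$ and $-$, so that exactly $(q+1)/2$ fall in $\mathcal{L}^-_{\mathcal{S}}$ and none in $\mathcal{L}^-_{\mathcal{S}^c}$. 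The paper records precisely these values in its table; your sketch would need to supply the justifications.
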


\begin{proof}
For each point $X$ not in $P^\perp$, we can associate a vector $\tilde{X}$ as follows:
$$\tilde{X}:= [ a, a^c ], \quad a=|\{\pi\in \mathcal{O}~\mid~X^\pi\in \mathcal{S}\}|,\quad
a^c=|\{\pi\in \mathcal{O}~\mid~X^\pi\in \mathcal{S}^c\}|.$$ This allows us to partition
the points of $\Knarr(\mathcal{O})$ so that we can easily construct a tactical
decomposition matrix as follows.  Consider the matrix whose columns are indexed by
$\mathcal{O}^+$, $\mathcal{L}^+_{\mathcal{S}}$, $\mathcal{L}^+_{\mathcal{S}^c}$,
$\mathcal{L}^-_{\mathcal{S}}$, $\mathcal{L}^-_{\mathcal{S}^c}$, and whose rows are indexed
by a partition of the points of $\Knarr(\mathcal{O})$ which we describe as follows.  The
single element $P$ is one part of the partition, and the points of type (ii) split into
two parts depending if the given point is in some element of $\mathcal{O}^+$ or
$\mathcal{O}^-$. (Note that such a point cannot belong to two elements of
$\mathcal{O}$). For the points of type (i), those in $\ell^\perp$ are partitioned by
$\mathcal{S}$ and $\mathcal{S}^c$.  For those outside of $\ell^\perp$, we partition the
points $X$ according to their vector value $\tilde{X}$.

To show that $\mathcal{O}^+\cup \mathcal{L}^+_{\mathcal{S}}\cup
\mathcal{L}^-_{\mathcal{S}^c}$ is a hemisystem, we need to show that the corresponding
columns in this matrix add to the constant column-vector with each entry equal to
$(q+1)/2$.  The initial $1+2+2$ rows of the table can be filled in easily, and we then
have
\begin{center}\footnotesize
\begin{tabular}{p{3cm}|cc|cccc}
&$\mathcal{O}^+$&$\mathcal{O}^-$& $\mathcal{L}^+_{\mathcal{S}}$& 
$\mathcal{L}^+_{\mathcal{S}^c}$& $\mathcal{L}^-_{\mathcal{S}}$& $\mathcal{L}^-_{\mathcal{S}^c}$\\
\hline
$P$&$\half(q+1)$&$\half(q+1)$&0&0&0&0\\
Point (i) in elt. $\mathcal{S}$&0&0&$\half(q+1)$&0&$\half(q+1)$&0\\
Point (i) in elt. $\mathcal{S}^c$&0&0&0&$\half(q+1)$&0&$\half(q+1)$\\
Point (ii) in elt. $\mathcal{O}^+$&1&0&$\half(q-1)$&$\half(q+1)$&0&0\\
Point (ii) in elt. $\mathcal{O}^-$&0&1&0&0&$\half(q-1)$&$\half(q+1)$ \\
\hline
Point (i) $X$ not in $\ell^\perp$,
and $\tilde{X}=[a,a^c]$&0&0&?&?&?&?\\
\end{tabular}
\end{center}
So now we are left with the submatrix defined by the points of type (i) that are not in
$\ell^\perp$.  Let us consider one row of this submatrix, representing those points $X$
which have a constant value for $\tilde{X}$, say $[a,a^c]$.  First notice that $a+a^c=q+1$
and hence by the regularity condition of the hypothesis, we know that the corresponding
row of the matrix is

\begin{center}
\begin{tabular}{l|cc|cccc}
&$\mathcal{O}^+$&$\mathcal{O}^-$& $\mathcal{L}^+_{\mathcal{S}}$& $\mathcal{L}^+_{\mathcal{S}^c}$&
 $\mathcal{L}^-_{\mathcal{S}}$& $\mathcal{L}^-_{\mathcal{S}^c}$\\
\hline
$X\notin \ell^\perp$,
$\tilde{X}=[a,a^c]$&0&0&$\half a$&$\half a^c$&$\half a$&$\half a^c$\\
\end{tabular}
\end{center}
It therefore follows that $\mathcal{O}^+\cup \mathcal{L}^+_{\mathcal{S}}\cup
\mathcal{L}^-_{\mathcal{S}^c}$ is a hemisystem.
\end{proof}

\begin{corollary}\label{constructioncorollary}
Suppose in Construction \ref{construction}, that for all $X\notin P^\perp,\ell^\perp$,
$$\{X^\pi\mid\pi\in\mathcal{O}^+\}=\{X^\pi\mid\pi\in\mathcal{O}^-\}.$$ Then the set
$\mathcal{O}^+\cup \mathcal{L}^+_{\mathcal{S}}\cup \mathcal{L}^-_{\mathcal{S}^c}$ of lines
of $\Knarr(\mathcal{O})$ is a hemisystem for any choice of $\mathcal{S}$ with
$|{\mathcal{S}}|=(q-1)/2$.
\end{corollary}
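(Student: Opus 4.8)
The plan is to derive Corollary~\ref{constructioncorollary} as an immediate consequence of Theorem~\ref{theorem:construction}, since the hypothesis of the corollary is strictly stronger than the regularity condition appearing there. Concretely, I would argue as follows. Fix an arbitrary point $X\notin P^\perp,\ell^\perp$ and an arbitrary subset $\mathcal{S}$ of the totally isotropic planes on $\ell$ (other than $\langle P,\ell\rangle$) with $|\mathcal{S}|=(q-1)/2$, and let $\mathcal{S}^c$ be its complement among those planes. We must verify the equality
$$|\{\pi\in\mathcal{O}^+\mid X^\pi\in\mathcal{S}\}|=|\{\pi\in\mathcal{O}^-\mid X^\pi\in\mathcal{S}\}|$$
required by Theorem~\ref{theorem:construction}, and then that theorem delivers the hemisystem.

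The key observation is that the hypothesis $\{X^\pi\mid\pi\in\mathcal{O}^+\}=\{X^\pi\mid\pi\in\mathcal{O}^-\}$ is to be read \emph{with multiplicity}: for every totally isotropic plane $\sigma$ on $\ell$, the number of $\pi\in\mathcal{O}^+$ with $X^\pi=\sigma$ equals the number of $\pi\in\mathcal{O}^-$ with $X^\pi=\sigma$. (If the intended reading were merely an equality of sets without multiplicity, I would first need to check that the map $\pi\mapsto X^\pi$ is injective on $\mathcal{O}$, or otherwise record the multiplicity version as what is actually used; this is the one point where I would look back at the geometry of the definition $X^\pi:=\langle\ell,\ell^\perp\cap\langle X,X^\perp\cap\pi\rangle\rangle$ to confirm the statement is being applied correctly.) Granting the multiplicity reading, summing this plane-by-plane equality over all $\sigma\in\mathcal{S}$ gives exactly
$$|\{\pi\in\mathcal{O}^+\mid X^\pi\in\mathcal{S}\}|=\sum_{\sigma\in\mathcal{S}}|\{\pi\in\mathcal{O}^+\mid X^\pi=\sigma\}|=\sum_{\sigma\in\mathcal{S}}|\{\pi\in\mathcal{O}^-\mid X^\pi=\sigma\}|=|\{\pi\in\mathcal{O}^-\mid X^\pi\in\mathcal{S}\}|,$$
which is precisely the regularity condition, and it holds for every such $X$ and every admissible $\mathcal{S}$.

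Having verified the hypothesis of Theorem~\ref{theorem:construction} for all $X\notin P^\perp,\ell^\perp$ and for the given $\mathcal{S}$, that theorem immediately yields that $\mathcal{O}^+\cup\mathcal{L}^+_{\mathcal{S}}\cup\mathcal{L}^-_{\mathcal{S}^c}$ is a hemisystem of $\Knarr(\mathcal{O})$. Since $\mathcal{S}$ was an arbitrary subset of size $(q-1)/2$, this holds for every choice of $\mathcal{S}$, completing the proof. The argument is genuinely short; the only place requiring care — and the point I expect to be the ``obstacle'', such as it is — is making the multiplicity bookkeeping precise, i.e.\ being explicit that the set equality in the hypothesis is an equality of multisets (or equivalently that one is comparing the functions $\sigma\mapsto|\{\pi\in\mathcal{O}^\pm:X^\pi=\sigma\}|$), so that the summation over $\mathcal{S}$ is valid regardless of which half-sized $\mathcal{S}$ is chosen.
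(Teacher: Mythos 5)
Your argument is correct and is exactly how the paper intends the corollary to be read: the corollary is stated without a separate proof, as an immediate consequence of Theorem~\ref{theorem:construction}, the hypothesis being a strengthening of the regularity condition that holds uniformly over every admissible choice of $\mathcal{S}$. The multiplicity worry you flag is resolved by the fact (recorded later in the paper for the quotient space $P^\perp/P$) that the map $\pi\mapsto X^\pi$ is two-to-one on $\mathcal{O}$; since $|\mathcal{O}^+|=|\mathcal{O}^-|=(q+1)/2$ equals the size of the common image, the map is injective on each half, so each fibre contains exactly one element of $\mathcal{O}^+$ and one of $\mathcal{O}^-$ and the set equality is automatically a multiset equality.
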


The objective in the remainder of this section is to find a method for finding a partition
of $\mathcal{O}$ and a line $\ell$ such that the condition of Corollary
\ref{constructioncorollary} is satisfied.

\begin{remark}
It is not known whether every hemisystem arising from Theorem \ref{theorem:construction}
must also satisfy the stronger condition of the above corollary.
\end{remark}

Now we look to the generalised quadrangle $\mathsf{W}(3,q)$ and certain totally isotropic
lines which respect a partition of the BLT-set $\mathcal{B}$. Recall from Section
\ref{flocks} that any line of $\mathsf{W}(3,q)$ not in $\mathcal{B}$ must be concurrent
with $0$ or $2$ elements of $\mathcal{B}$. So in particular, if we fix a point $X$ not in
any element of $\mathcal{B}$, then the map
$$b\mapsto \langle X,b\cap X^\perp\rangle$$
from $\mathcal{B}$ to lines on $X$ is two-to-one (see also \cite[Lemma 3.4]{BaderDuranteLawLunardonPenttila}).

\begin{definition}
Let $\mathcal{B}^+$ and $\mathcal{B}^-$ be a partition of a BLT-set $\mathcal{B}$ of lines
of $\mathsf{W}(3,q)$. Then a totally isotropic line $\ell$ not meeting any element of
$\mathcal{B}$ is \textbf{compatible} with the partition $\left\{\mathcal{B}^+,
\mathcal{B}^-\right\}$ if for all $X\in\ell$, we have $\{ \langle X, b \cap X^\perp
\rangle\mid b\in\mathcal{B}^+\}=\{ \langle X, b \cap X^\perp \rangle\mid
b\in\mathcal{B}^-\}$.
\end{definition}

We can identify when Corollary \ref{constructioncorollary} holds by just looking to the
quotient symplectic space $P^\perp/P$.

\begin{lemma}\label{constructioncorollary2}
Consider a flock generalised quadrangle $\Knarr(\mathcal{O})$ in the Knarr model with base
point $P$ and null polarity $\perp$.  Fix a totally isotropic line $\ell$ in $P^\perp$
that is disjoint from every element of $\mathcal{O}$.  Suppose there is a partition of
$\mathcal{O}$ into two halves $\mathcal{O}^+$ and $\mathcal{O}^-$.  Then the following two
conditions are equivalent:
\begin{enumerate}
\item[(i)] For all $E\notin P^\perp,\ell^\perp$, we have
  $\{E^\pi\mid\pi\in\mathcal{O}^+\}=\{E^\pi\mid\pi\in\mathcal{O}^-\}$.
\item[(ii)] The line $\langle P,\ell\rangle/ P$ of the quotient symplectic space $P^\perp
  / P$ is compatible with the following partition of the projection of $\mathcal{O}$ to
  $P^\perp/P$:
$$\mathcal{O}^+/P:=\{\pi/P\mid\pi\in\mathcal{O}^+\},\quad\mathcal{O}^-/P:=\{\pi/P\mid\pi\in\mathcal{O}^-\}.$$
\end{enumerate}
\end{lemma}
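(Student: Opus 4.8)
The plan is to show that the two conditions are, after unwinding the definitions of $E^\pi$ and of compatibility, literally the same statement, the only work being to match up the geometric objects on the two sides of the quotient map $P^\perp \to P^\perp/P$. Write $\bar{\cdot}$ for the image of a subspace under this quotient, so that $\bar{P^\perp} = P^\perp/P$ carries the symplectic form induced by $\perp$, and the line $\ell$ (disjoint from $P$ since $\ell$ is disjoint from every element of $\mathcal{O}$, hence not through $P$ — or at worst we note $\ell \cap P = \varnothing$ as vector spaces because $P \in \pi$ for, say, $\langle P,\ell\rangle$ but $\ell$ itself misses $P$) maps to a totally isotropic line $\bar\ell = \langle P,\ell\rangle/P$ of $\mathsf{W}(3,q) \cong P^\perp/P$, and each $\pi \in \mathcal{O}$ maps to the totally isotropic line $\pi/P$ of $\mathsf{W}(3,q)$. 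The partition $\{\mathcal{O}^+,\mathcal{O}^-\}$ of $\mathcal{O}$ corresponds bijectively to the partition $\{\mathcal{O}^+/P,\mathcal{O}^-/P\}$ (the projection is injective on $\mathcal{O}$ since distinct totally isotropic planes on $P$ give distinct lines of the quotient).

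Next I would translate the left-hand side. Fix a point $E \notin P^\perp \cup \ell^\perp$. The key computation is to identify $E^\pi = \langle \ell, \ell^\perp \cap \langle E, E^\perp \cap \pi\rangle\rangle$ with the pullback of the line $\langle X, b \cap X^\perp \rangle$ of $\mathsf{W}(3,q)$ for a suitable point $X \in \bar\ell$ and $b = \pi/P$. Concretely: $E^\perp \cap \pi$ is a line of $\pi$ (a hyperplane section of the plane $\pi$, nonempty since $E \notin P^\perp \supseteq \pi^\perp$... one checks the dimension count gives a line), so $\langle E, E^\perp\cap\pi\rangle$ is a totally isotropic plane; intersecting with the hyperplane $\ell^\perp$ gives a line, and $\langle \ell, \cdot\rangle$ makes it a plane on $\ell$. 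Passing to the quotient by $P$, I claim $E^\pi/\langle P \text{-stuff}\rangle$ becomes exactly $\langle X_E, (\pi/P) \cap X_E^\perp\rangle$ where $X_E := \langle P, \ell^\perp \cap \langle E, E^\perp\cap P\rangle... \rangle$ — more cleanly, $X_E$ is the point of $\bar\ell$ cut out by $E$ via the rule that in the quotient $\mathsf{W}(3,q)$ the point on $\bar\ell$ "seen from $E$" is well-defined. The honest way to do this is to pick coordinates (as in the proof of Lemma~\ref{inthequadric}), write $P = \langle e_1 \rangle$, $P^\perp = \langle e_1,\dots,e_5\rangle$, $\ell = \langle e_2, e_3\rangle$ say, and check that the map $E \mapsto X_E$ is surjective from the points outside $P^\perp \cup \ell^\perp$ onto the points of $\bar\ell$, and that $E^\pi$ depends on $E$ only through $X_E$, with $E^\pi$ mapping to $\langle X_E, (\pi/P)\cap X_E^\perp\rangle$.

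With that dictionary in hand the equivalence is immediate: condition (i) says $\{E^\pi : \pi \in \mathcal{O}^+\} = \{E^\pi : \pi \in \mathcal{O}^-\}$ for all admissible $E$; applying the (bijective on the relevant planes-on-$\ell$) quotient this says $\{\langle X_E, (\pi/P)\cap X_E^\perp\rangle : \pi/P \in \mathcal{O}^+/P\} = \{\langle X_E, (\pi/P)\cap X_E^\perp\rangle : \pi/P \in \mathcal{O}^-/P\}$; and since $E \mapsto X_E$ is onto the points $X$ of $\bar\ell = \langle P,\ell\rangle/P$, this is exactly the compatibility condition in the definition preceding the lemma, i.e. condition (ii). I would also record the one subtlety justifying "onto": a point $X$ of $\bar\ell$ corresponds to a totally isotropic plane $\Pi_X$ on $P$ with $\ell \subseteq \Pi_X^{??}$... concretely $\Pi_X \supseteq P$ and $\Pi_X/P = X$; then any $E$ with $\langle E, E^\perp \cap P\rangle \cap \ell^\perp$ projecting to $X$ works, and such $E$ exists outside $\ell^\perp$ because $\ell^\perp \not\supseteq P^\perp$.

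The main obstacle is purely bookkeeping: making the identification $E^\pi \leftrightarrow \langle X_E, (\pi/P)\cap X_E^\perp\rangle$ precise, including checking the dimension counts (that $E^\perp\cap\pi$ is genuinely a line, that $\ell^\perp \cap \langle E, E^\perp\cap\pi\rangle$ is genuinely a line, that $X_E$ is a genuine point and not all of $\bar\ell$ — this is exactly where the hypothesis $E \notin \ell^\perp$ is used), and verifying that $\langle X, b\cap X^\perp\rangle$ in the quotient pulls back to $\langle \ell, \ell^\perp \cap \langle E, E^\perp \cap \pi\rangle\rangle$ on the nose rather than up to something. I expect this is cleanest done in explicit coordinates adapted so that $P$, $\ell$, and the form $\beta$ are in the standard shape used earlier in the paper, after which both sides become the same small linear-algebra expression and the surjectivity of $E \mapsto X_E$ is a one-line parameter count.
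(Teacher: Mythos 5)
There is a genuine gap: the pivotal claim that ``$E^\pi$ depends on $E$ only through $X_E$'' (equivalently, that $\langle X,b\cap X^\perp\rangle$ pulls back to $E^\pi$ ``on the nose rather than up to something'') is false. Here is a concrete check. Take $\beta(x,y)=x_1y_6-x_6y_1+x_2y_5-x_5y_2+x_3y_4-x_4y_3$, $P=\langle e_1\rangle$, $\pi=\langle e_1,e_2,e_3\rangle$ and $\ell=\langle e_2+e_4,\,e_3+e_5\rangle$, so that $\ell$ is a totally isotropic line of $P^\perp$ disjoint from $\pi$ and $\ell^\perp=\{x\mid x_3=x_5,\ x_2=x_4\}$. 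The two points $E=\langle e_2+e_6\rangle$ and $E'=\langle e_1+e_2+e_6\rangle$ both lie outside $P^\perp$ and $\ell^\perp$ and determine the same point of $\ell$, namely $\ell\cap E^\perp=\ell\cap (E')^\perp=\langle e_2+e_4\rangle$, hence the same $X_E$. Nevertheless $E^\perp\cap\pi=(E')^\perp\cap\pi=\langle e_2,e_3\rangle$, and one computes $E^\pi=\langle \ell,e_6\rangle$ while $(E')^\pi=\langle\ell,e_1+e_6\rangle$: two distinct totally isotropic planes on $\ell$. The underlying obstruction is that there is no canonical correspondence between the planes on $\ell$ other than $\langle P,\ell\rangle$ (where $E^\pi$ lives) and the lines of $P^\perp/P$ through $X_E$ (where $\langle X_E,(\pi/P)\cap X_E^\perp\rangle$ lives): such a plane meets $P^\perp$ exactly in $\ell$, so ``passing to the quotient by $P$'' collapses them all to $\bar\ell$ and cannot furnish the dictionary you want.

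What is true, and what the paper's proof actually establishes, is weaker but sufficient: for each fixed $E$ the map $\pi\mapsto E^\pi$ factors as $\Phi_E\circ\Psi_{X_E}$, where $\Psi_{X_E}(\pi)=\langle m,m^\perp\cap\ell^\perp\rangle$ with $m=\pi\cap\langle\ell^\perp,E\rangle$ depends only on $X_E$ (because $\langle\ell^\perp,E\rangle=(\ell\cap E^\perp)^\perp$), and $\Phi_E$ is a bijection onto the planes on $\ell$ that genuinely depends on $E$ and not just on $X_E$. Since condition (i) only asserts an equality of \emph{sets}, post-composing with the bijection $\Phi_E$ is harmless, and the $\Psi_{X_E}$-level equality projects modulo $P$ to compatibility at $X_E$; surjectivity of $E\mapsto X_E$ then finishes the argument exactly as you intend. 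But proving that $\Phi_E$ is well defined and bijective is the main content of the proof --- the paper does it via a commutative diagram of $q$-to-one surjections and a counting argument --- not bookkeeping, and the identity you propose to verify in coordinates is one that fails. You need to replace ``$E^\pi$ is a function of $(X_E,\pi)$'' by ``$E^\pi$ is an $E$-dependent bijective function of an $X_E$-dependent quantity'', and then observe that this suffices.
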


\begin{proof}
Let $E\notin P^\perp,\ell^\perp$.  We will show that the two sets of objects are in
bijective correspondence, and to do this, we break the proof down into steps. These steps
require us to define six sets of objects as follows. Let:
\begin{itemize}
\item $\textsf{Planes}(P)$ be the set of planes on $P$ not meeting $\ell$;
\item $\textsf{Lines}(P^\perp\cap\langle\ell^\perp, E\rangle)$ be the set of lines on $P$
  in the $3$-space $P^\perp\cap\langle\ell^\perp, E\rangle$, meeting $\langle
  P,\ell\rangle$ in $P$;
\item $\textsf{Planes}(P^\perp\cap\langle\ell^\perp, E\rangle)$ be the set of planes on
  $P$ inside $P^\perp\cap\langle \ell^\perp, E\rangle$ meeting $\ell$ in a point;
\item $\textsf{Planes}(E)$ be the set of planes on $E$ not meeting $\langle
  P,\ell\rangle$;
\item $\textsf{Points}(E^\perp\cap\ell^\perp)$ be the set of points of the plane
  $E^\perp\cap\ell^\perp$ not in $P^\perp$;
\item $\textsf{Planes}(\ell)$ be the set of planes on $\ell$, not on $P$.
\end{itemize}

In fact, we will eventually establish that the diagram below commutes, and it will serve
as a guide to showing that $\Phi$ on the bottom line is bijective by proving that
particular maps are surjections whose preimages yield regular partitions:
$$
\begin{CD}
\textsf{Planes}(P) @> \pi\mapsto \langle E,E^\perp\cap \pi\rangle >> \textsf{Planes}(E) \\
@VV\pi\mapsto \pi\cap\langle\ell^\perp,E\rangle V @VV\sigma\mapsto \sigma\cap\ell^\perp V\\
\textsf{Lines}(P^\perp\cap\langle\ell^\perp, E\rangle)@>m\mapsto \ell^\perp\cap\langle 
E,E^\perp\cap m\rangle >>  \textsf{Points}(E^\perp\cap\ell^\perp)\\
@VVm\mapsto \langle m,m^\perp\cap \ell^\perp\rangle V @VV X\mapsto \langle\ell, X\rangle V\\
\textsf{Planes}(P^\perp\cap\langle\ell^\perp, E\rangle)@>
\Phi >>  \textsf{Planes}(\ell)\\
\end{CD}$$
where $\Phi$ is defined by
$$\Phi:\langle m, m^\perp\cap \ell^\perp\rangle\rightarrow \langle
\ell,\ell^\perp\cap\langle E,E^\perp\cap \pi_m\rangle\rangle, \quad (\pi_m:=\langle m,
m^\perp\cap \ell^\perp\rangle).$$

Let us look at these maps in turn. It is not difficult to see that the map $\pi\mapsto
\pi\cap\langle\ell^\perp,E\rangle$ is a $q$-to-one surjective map from
$\textsf{Planes}(P)$ onto $\textsf{Lines}(P^\perp\cap\langle\ell^\perp, E\rangle)$, as
each element of $\textsf{Lines}(P^\perp\cap\langle\ell^\perp, E\rangle)$ is incident with
$q$ elements of $\textsf{Planes}(P)$. The map $m\mapsto \langle m,m^\perp\cap
\ell^\perp\rangle$ is also a $q$-to-one surjective map, but this time from
$\textsf{Lines}(P^\perp\cap\langle\ell^\perp, E\rangle)$ onto
$\textsf{Planes}(P^\perp\cap\langle\ell^\perp, E\rangle)$, since each element of
$\textsf{Planes}(P^\perp\cap\langle\ell^\perp, E\rangle)$ contains $q$ elements of
$\textsf{Lines}(P^\perp\cap\langle\ell^\perp, E\rangle)$.  So the composition of these
maps, which we see in the diagram as ``down from the top left to the bottom left'', is a
$q^2$-to-one surjective map from $\textsf{Planes}(P)$ onto
$\textsf{Planes}(P^\perp\cap\langle\ell^\perp, E\rangle)$. If we can show that the map
$\pi\mapsto \langle\ell, \ell^\perp\cap \langle E,E^\perp\cap \pi\rangle\rangle$ is a
$q^2$-to-one surjection, then $\Phi$ will be well-defined and bijective. Now the map
$\pi\mapsto \langle E,E^\perp\cap \pi\rangle$ is injective since if $\langle E,E^\perp\cap
\pi_1\rangle=\langle E,E^\perp\cap \pi_2\rangle$ for two distinct planes $\pi_1$ and
$\pi_2$ , then $E^\perp\cap\pi_1$ and $E^\perp\cap\pi_2$ are coplanar and hence meet in at
least a point, which implies that $P\in E^\perp$; a contradiction.  Now we work our way
down the right hand side of the diagram. The map $\sigma\mapsto \sigma\cap\ell^\perp$ is
clearly a $q$-to-one surjection (as every point of
$\textsf{Points}(E^\perp\cap\ell^\perp)$ is incident with $q$ elements of
$\textsf{Planes}(E)$), and the map $X\mapsto \langle \ell, X\rangle$ from
$\textsf{Points}(E^\perp\cap\ell^\perp)$ to $\textsf{Planes}(\ell)$ is also a $q$-to-one
surjection. So we see that the composition of the maps traversing downwards on the
right-hand side of the diagram is a $q^2$-to-one surjection.  Therefore it follows that
the left-to-right maps in the diagram are each bijections, and in particular, $\Phi$ is a
bijection.

So we have established that condition (i) of the hypothesis is equivalent to the condition
that for all $E\notin P^\perp,\ell^\perp$ we have
\begin{equation}\label{condition}
\{\langle m,m^\perp\cap\ell^\perp \rangle ~\mid~ m=\pi\cap
\langle\ell^\perp,E\rangle,\pi\in\mathcal{O}^+\}=\{\langle m,m^\perp\cap\ell^\perp\rangle
~\mid~ m=\pi\cap \langle\ell^\perp,E\rangle, \pi\in\mathcal{O}^-\}.
\end{equation}
Now we project by $P$ to the symplectic space $P^\perp/P$ (which is isomorphic to
$\mathsf{W}(3,q)$). The image of $\mathcal{O}$ under the projection is a BLT-set of lines
of $P^\perp /P$. The plane $\langle P,\ell\rangle$ is mapped to a line which does not meet
any element of the BLT-set of lines. If $\pi\in\mathcal{O}$, then the line
$m=\pi\cap\langle\ell^\perp, E\rangle$ projects to the unique point on $\pi/P$ which is
collinear to the point $(\ell\cap E^\perp)/P$ on $\langle P,\ell\rangle/P$.
\begin{figure}[H]
\centering
\includegraphics[scale=0.6]{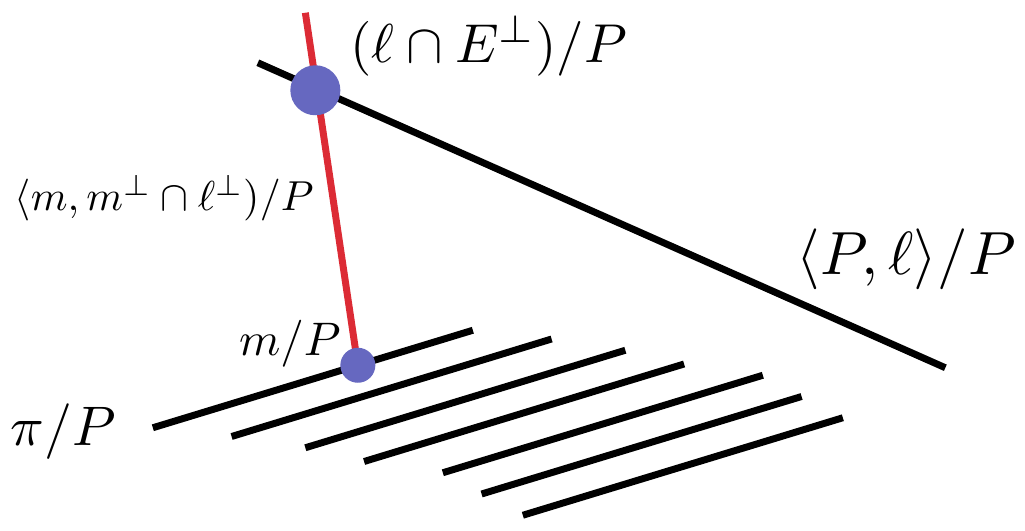}
\caption{The projection to $P^\perp/P$.}
\end{figure}

The choice of $E$ corresponds to the choice of the point on $\langle P,\ell\rangle /P$. So
the condition (\ref{condition}) is equivalent to having, for all points $X$ on the line
$\langle P,\ell\rangle /P$,
$$\{ \langle X, b \cap X^\perp \rangle\mid b\in\mathcal{O}^+/P\}=\{ \langle X, b \cap
X^\perp \rangle\mid b\in\mathcal{O}^-/P\},$$ that is, $\langle P,\ell\rangle /P$ is
compatible with the partition $\{\mathcal{O}^+/P,\mathcal{O}^-/P\}$.
\end{proof}

The remaining ingredient for the proof of Theorem \ref{thm:intro} is the following:

\begin{theorem}\label{maintheorem}
Consider a set $\mathcal{O}$ of totally isotropic planes of $\mathsf{W}(5,q)$ each
incident with a point $P$ such that
$$\mathcal{B}:=\{\pi/P\mid \pi\in\mathcal{O}\}$$ is a BLT-set of lines of the quotient
symplectic space $P^\perp/P\cong\mathsf{W}(3,q)$.  Suppose, furthermore, that we have a
line $\ell$ of $P^\perp/P$ not meeting any element of $\mathcal{B}$, and let $\equiv_\ell$
be the binary relation on $\mathcal{B}$ given in Definition \ref{defn:eqrelation}.  Then
$\ell$ is compatible with the equivalence classes of $\equiv_\ell$.
\end{theorem}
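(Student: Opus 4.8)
The plan is to work in the dual parabolic quadric $\mathsf{Q}(4,q)$, exactly as in the proof of Lemma~\ref{lemma:eqrelation}, and to reinterpret the compatibility condition there. Fix a point $X$ on $\ell$; the map $b\mapsto\langle X, b\cap X^\perp\rangle$ from $\mathcal{B}$ to the lines of $\mathsf{W}(3,q)$ on $X$ is two-to-one (as recalled before the definition of compatibility), so $\ell$ is compatible with a partition $\{\mathcal{B}^+,\mathcal{B}^-\}$ precisely when, for every $X\in\ell$, each of the $(q+1)/2$ lines in the image is hit once from $\mathcal{B}^+$ and once from $\mathcal{B}^-$. Equivalently, $b$ and $b'$ get sent to the \emph{same} line on $X$ exactly when they lie in opposite parts. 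So I must show: for each $X\in\ell$, the pairs $\{b,b'\}$ with $\langle X,b\cap X^\perp\rangle=\langle X,b'\cap X^\perp\rangle$ are precisely the pairs lying in opposite $\equiv_\ell$-classes, and moreover this pairing is a perfect matching between the two classes.

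First I would record the geometric meaning of "$b,b'$ map to the same line on $X$". This happens iff $b$, $b'$ and $X$ lie in a common totally isotropic plane, i.e. iff there is a totally isotropic line through $X$ meeting both $b$ and $b'$ — in other words iff $X$ lies on a line of $\mathrm{Tr}(b,b')$ that also meets $\ell$... more carefully, iff the line $\langle X, b\cap X^\perp\rangle$ (which automatically meets $b$) also meets $b'$, equivalently meets $\ell$ and $b$ and $b'$ simultaneously. As $X$ ranges over $\ell$, the lines meeting $\ell$, $b$ and $b'$ are exactly the elements of $\mathrm{Tr}(b,\ell)\cap\mathrm{Tr}(b',\ell)$ — wait, I need lines meeting $b$, $b'$ \emph{and} $\ell$, which is $\mathrm{Tr}(b,b')\cap\{\text{lines meeting }\ell\}$, but since everything is happening with $\ell$ fixed it is cleaner to phrase it as: $b,b'$ map to the same line on some point of $\ell$ iff $\mathrm{Tr}(b,\ell)\cap\mathrm{Tr}(b',\ell)\neq\varnothing$, which by Definition~\ref{defn:eqrelation} is exactly $b\not\equiv_\ell b'$. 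This already shows that if $b\equiv_\ell b'$ (and $b\neq b'$) they map to distinct lines on every point of $\ell$, and if $b\not\equiv_\ell b'$ they map to the same line on some point of $\ell$.

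The substantive point, and the main obstacle, is the \emph{uniformity} over $X$: I must show that when $b\not\equiv_\ell b'$ the two lines of $\mathrm{Tr}(b,\ell)\cap\mathrm{Tr}(b',\ell)$ (there are exactly two, by Lemma~\ref{grid0or2}) pass through \emph{two distinct} points of $\ell$, so that for a fixed $X\in\ell$ there is \emph{exactly one} partner $b'$ in the opposite class with which $b$ collides at $X$; combined with $|\mathcal{B}^+|=|\mathcal{B}^-|=(q+1)/2$ (Lemma~\ref{lemma:eqrelation}) this forces the collision pattern at $X$ to be a perfect matching between the two classes, which is precisely compatibility. To prove the two lines meet $\ell$ in distinct points, I would dualise to $\mathsf{Q}(4,q)$: $\ell^D$ is a point, $\ell$ corresponds to the cone $C=(\ell^D)^\perp$ restricted appropriately, $\mathrm{Tr}(b,\ell)$ and $\mathrm{Tr}(b',\ell)$ become the conic sections $C\cap\pi$ for the non-degenerate planes $\pi=\langle b^D,\dots\rangle^\perp$ as in Lemma~\ref{inthequadric}, and the two common lines correspond to the two points of a secant line $s$ of $\mathsf{Q}(4,q)$; the points of $\ell$ through which a common line passes correspond to the generators of the cone $C$ through those two points of $s$, and these coincide iff $s$ passes through the vertex $\ell^D$ of the cone — but $s$ is a secant line and $\ell^D$... hmm, one checks $s$ cannot pass through the cone vertex since $s$ is external-to-cone-vertex by the non-degeneracy in Lemma~\ref{inthequadric}'s setup. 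So the two points of $s$ lie on distinct generators, i.e. correspond to distinct points of $\ell$. Assembling: at each $X\in\ell$ the $q+1$ lines of $\mathcal{B}$ are matched in $(q+1)/2$ opposite-class pairs colliding at $X$, giving $(q+1)/2$ distinct images each hit once from each part — this is compatibility, completing the proof.
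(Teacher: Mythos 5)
Your proof is correct and follows essentially the same route as the paper: a line through a point $E$ of $\ell$ meeting two elements $b,b'$ of $\mathcal{B}$ lies in $\mathrm{Tr}(b,\ell)\cap\mathrm{Tr}(b',\ell)$, so $b\not\equiv_\ell b'$, and the two-to-one property of $b\mapsto\langle E,b\cap E^\perp\rangle$ then forces the images of $\mathcal{B}^+$ and $\mathcal{B}^-$ to coincide. One remark: what you call the ``main obstacle'' is not one --- for a fixed $E\in\ell$ the $(0,2)$-property of the BLT-set already guarantees that each $b$ has exactly one collision partner at $E$, so the dual argument that the two common transversals of $b$, $b'$ and $\ell$ meet $\ell$ in distinct points is superfluous (and the paper accordingly omits it).
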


\begin{proof}
Let $\mathcal{B}^+$ and $\mathcal{B}^-$ be the two equivalence classes of $\equiv_\ell$.
Let $E$ be a point on $\ell$, and let $m$ be a line on $E$ concurrent with two distinct
elements $b$ and $b'$ of $\mathcal{B}$.  Since $m$ is both an element of
$\mathrm{Tr}(b,\ell)$ and $\mathrm{Tr}(b',\ell)$, it is clear that $b\not\equiv_\ell b'$;
that is $b$ and $b'$ lie in different $\equiv_\ell$-classes. Hence whenever we have a line
$m$ concurrent with $\ell$ and concurrent with two elements $b$ and $b'$ of $\mathcal{B}$,
then $m=\langle E, b \cap E^\perp \rangle=\langle E, b' \cap E^\perp \rangle$ where
$E=m\cap \ell$, and we can assume that $b\in\mathcal{B}^+$ and $b'\in\mathcal{B}^-$.  So
it follows, that for all $E\in\ell$, we have $\{ \langle E, b \cap E^\perp \rangle\mid
b\in\mathcal{B}^+\}=\{ \langle E, b' \cap E^\perp \rangle\mid b'\in\mathcal{B}^-\}$.
Therefore $\ell$ is compatible with $\{\mathcal{B}^+,\mathcal{B}^-\}$.
\end{proof}

So together with Corollary \ref{constructioncorollary} and Lemma
\ref{constructioncorollary2}, we obtain a hemisystem of the associated flock generalised
quadrangle and hence we have proved Theorem \ref{thm:intro}.

\begin{remark}\
\begin{enumerate}
\item
The hemisystem one obtains from Corollary \ref{constructioncorollary} depends on the
choice of a set $\mathcal{S}$ of $(q-1)/2$ totally isotropic planes on a line $\ell$ (in
$P^\perp$ disjoint from every element of $\mathcal{O}$).  For a set $\mathcal{O}$ and line
$\ell$ as above, we can obtain different hemisystems by the choice of $\mathcal{S}$, which
we explore further in \cite{BambergGiudiciRoyleprep}.

\item
To our knowledge, the only possible partition $\{\mathcal{O}^+,\mathcal{O}^-\}$ that works
for a given line $\ell$ is the one arising from the equivalence relation $\equiv_\ell$. We
have found no counter-examples in small flock generalised quadrangles.
\end{enumerate}
\end{remark}

%
%

\section{The Cossidente-Penttila hemisystems}\label{section:cphemi}

As we mentioned in the introduction, Cossidente and Penttila \cite{CossidentePenttila05}
produced for each odd prime power $q$, a hemisystem of $\mathsf{H}(3,q^2)$ admitting
$\mathsf{P\Omega}^-(4,q)$. Their construction can be summarised as follows. Consider an
elliptic quadric $\mathcal{E}\cong \mathsf{Q}^-(3,q)$ whose defining polarity commutes
with the polarity defining a fixed Hermitian variety $\mathsf{H}(3,q^2)$. We then see that
all the $q^2+1$ singular points of $\mathcal{E}$ are totally isotropic points of
$\mathsf{H}(3,q^2)$, and each totally isotropic line of $\mathsf{H}(3,q^2)$ meets
$\mathcal{E}$ in either no points or one point (see the first line of the proof of
\cite[Proposition 2.3]{CossidentePenttila05}). We may call such lines \textit{external}
and \textit{tangent} respectively. The key ingredient is the action of the perfect group
$\mathsf{P\Omega}^-(4,q)$ on the lines of $\mathsf{H}(3,q^2)$; it has two orbits on
tangents and two orbits on externals (this follows from the proof of \cite[Theorem
  3.1]{CossidentePenttila05}).  Take one of each and their union is a hemisystem admitting
$\mathsf{P\Omega}^-(4,q)$. In fact, it is clear that this is the only way one can obtain a
hemisystem admitting this group $\mathsf{P\Omega}^-(4,q)$ stabilising $\mathcal{E}$. We
will show in this section that the Cossidente-Penttila hemisystems can be constructed from
Theorem \ref{construction}, and we first interpret the original construction in the
following way:

\begin{lemma}\label{lemma:cphemi}
Let $q$ be an odd prime power and consider an elliptic quadric
$\mathcal{E}:=\mathsf{Q}^-(3,q)$ and Hermitian variety $\mathsf{H}(3,q^2)$ whose defining
polarities commute. Let $G$ be the subgroup $\mathsf{P\Omega}^-(4,q)$ stabilising
$\mathcal{E}$ and let $\Omega$ be one of the two orbits of size $q^2(q^2-1)/2$ of $G$ on
totally isotropic lines of $\mathsf{H}(3,q^2)$ external to $\mathcal{E}$.  Fix a point
$P\in\mathcal{E}$ and let $\mathcal{P}^+$ and $\mathcal{P}^-$ be the two orbits of $G_P$
on lines on $P$.  Let $\mathcal{M}^+$ be the set of lines of $\Omega$ which meet a unique
element of $\mathcal{P}^+$ in a point. Let $\mathcal{M}^-$ be comprised of the set of
lines of $\Omega$ which meet a unique element of $\mathcal{P}^-$ in a point, together with
the tangent lines which meet a unique element of $\mathcal{P}^-$ in a point.  Then
$$\mathcal{P}^+\cup \mathcal{M}^+\cup\mathcal{M}^-$$ is a hemisystem of
$\mathsf{H}(3,q^2)$ admitting $G$, and hence, is equivalent to a Cossidente-Penttila
hemisystem.
\end{lemma}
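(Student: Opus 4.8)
The plan is to verify that the set $\mathcal{P}^+\cup\mathcal{M}^+\cup\mathcal{M}^-$ satisfies the hemisystem property — exactly $(q^2+1)/2$ lines through every totally isotropic point of $\mathsf{H}(3,q^2)$ — and then invoke the uniqueness remark preceding the lemma to conclude it is (equivalent to) a Cossidente--Penttila hemisystem. I would split the point-set of $\mathsf{H}(3,q^2)$ into three orbits of $G$: the singular points of $\mathcal{E}$ (of which $P$ is one, and all are $G$-equivalent since $\mathsf{P\Omega}^-(4,q)$ is transitive on $\mathcal{E}$), and the points off $\mathcal{E}$, which I expect to form a single orbit as well — this should be extracted from the structure of $\mathsf{P\Omega}^-(4,q)$ acting on $\PG(3,q^2)$, or cited from the relevant line of \cite{CossidentePenttila05}. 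Because the whole configuration ($\mathcal{E}$, the Hermitian variety, the orbit decomposition, and the sets $\mathcal{M}^\pm$) is $G$-invariant by construction, it suffices to count lines of $\mathcal{P}^+\cup\mathcal{M}^+\cup\mathcal{M}^-$ through one representative point of each orbit.

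For a point $Q$ of $\mathsf{H}(3,q^2)$ not on $\mathcal{E}$: no line of $\mathcal{P}^+$ passes through it (those lines all contain $P\in\mathcal{E}$, and two distinct points of $\mathcal{E}$ are not collinear on a tangent... actually one must check $Q$ does not happen to lie on such a line — but a line on $P$ meeting $\mathcal{E}$ again would be secant, impossible since tangents/externals are the only types, so each line on $P$ meets $\mathcal{E}$ only in $P$ and hence misses $Q$). So through $Q$ we count only lines of $\mathcal{M}^+\cup\mathcal{M}^-$. Each of the $q^2+1$ totally isotropic lines on $Q$ is external or tangent, meeting $\mathcal{E}$ in $0$ or $1$ point; a line meeting $\mathcal{E}$ in a point $R$ lies in $\mathcal{M}^+$ or $\mathcal{M}^-$ according to whether the line $\langle P, \cdot\rangle$... no — according to whether it meets an element of $\mathcal{P}^+$ or $\mathcal{P}^-$, where $\mathcal{P}^\pm$ are $G_P$-orbits on lines through $P$, \emph{not} through $R$. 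Here I would translate the membership condition into one about lines through $R$ by transporting via $G$ (which is transitive on $\mathcal{E}$), so that the tangent/external lines through $Q$ are classified by the two $G_R$-orbits on lines through $R$; and an external line through $Q$ missing $\mathcal{E}$ altogether is counted iff it lies in $\Omega$. The key numerical input is that $\Omega$ has size $q^2(q^2-1)/2$, i.e. exactly half of the externals — this, together with the $G$-invariance and a fixed-point/orbit-counting argument (the stabiliser $G_Q$ acts on the externals and tangents through $Q$), should force exactly $(q^2+1)/2$ of the lines on $Q$ to be selected. For $Q = P$ (or any point of $\mathcal{E}$): the selected lines through $P$ are precisely $\mathcal{P}^+$, together with any lines of $\mathcal{M}^+\cup\mathcal{M}^-$ on $P$; but a line of $\mathcal{M}^\pm$ on $P$ would be a tangent or external meeting $\mathcal{E}$ at $P$ and meeting some element of $\mathcal{P}^\mp$ "in a point" — since it already passes through $P$ it cannot separately meet another line on $P$ in a further point, so $\mathcal{M}^\pm$ contributes nothing on $P$, and we are left with $|\mathcal{P}^+|$ lines; one then checks $|\mathcal{P}^+| = (q^2+1)/2$, i.e. that $G_P$ splits the $q^2+1$ lines on $P$ into two equal orbits. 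This last fact is the analogue of the two-orbit statement in \cite[Theorem 3.1]{CossidentePenttila05} localised at $P$, and I would either cite it or deduce it from the index-$2$-type subgroup structure of $\mathsf{P\Omega}^-(4,q)$ inside $\mathsf{PGO}^-(4,q)$.

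The main obstacle is the bookkeeping in the off-$\mathcal{E}$ case: one must carefully reconcile the two different loci of lines involved in the definition of $\mathcal{M}^\pm$ — lines through $P$ (defining $\mathcal{P}^\pm$) versus lines through $Q$ (whose membership we are testing) — and show that, after using transitivity of $G$ on $\mathcal{E}$ to move the "$+/-$" labelling to the point $R$ where a given line through $Q$ meets $\mathcal{E}$, the count comes out to $(q^2+1)/2$ regardless of the orbit of $Q$. I expect this amounts to: (number of externals through $Q$ lying in $\Omega$) $+$ (number of tangents through $Q$ of the "$-$" type) equals $(q^2+1)/2$, which in turn follows from the $\Omega$ being exactly half the externals plus the tangent-through-$Q$ count being forced by a parity/orbit argument on $G_Q$. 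Once the counting is done at all three representatives, $G$-invariance promotes it to all points, giving a hemisystem; and since this hemisystem is visibly $G$-invariant with $G = \mathsf{P\Omega}^-(4,q)$ stabilising $\mathcal{E}$, the uniqueness statement in the paragraph before the lemma identifies it with a Cossidente--Penttila hemisystem.
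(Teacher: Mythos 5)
Your overall strategy (verify the hemisystem count at one representative point per $G$-orbit, then invoke the uniqueness remark) is legitimate in principle, but as written it has three genuine problems. First, the parameters are wrong: $\mathsf{H}(3,q^2)$ is a generalised quadrangle of order $(q^2,q)$, so each point lies on $q+1$ totally isotropic lines, not $q^2+1$; the hemisystem condition is $(q+1)/2$ lines per point, and $G_P$ splits the $q+1$ lines on $P$ into two halves of size $(q+1)/2$. Second, the points off $\mathcal{E}$ do not form a single $G$-orbit: they split into the $q^2(q^2+1)(q+1)/2$ ``nearby'' points lying on exactly two tangents to $\mathcal{E}$ and the remaining $q^2(q^2+1)(q-1)/2$ points lying on no tangent at all, and these classes are $G$-invariant, so your case analysis is incomplete from the start. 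Third, and most seriously, the crux --- that exactly half of the $q+1$ lines through an arbitrary point off $\mathcal{E}$ are selected --- is precisely the step you defer to ``a parity/orbit argument on $G_Q$''. Knowing that $\Omega$ contains half of all external lines globally does not imply that it contains half of the external lines through each point; that equidistribution is essentially the content of the Cossidente--Penttila theorem and cannot be assumed.

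The paper avoids verifying the counting condition altogether. Writing $\mathcal{M}^-_T$ and $\mathcal{M}^-_E$ for the tangent and external lines in $\mathcal{M}^-$, it shows (i) $\mathcal{M}^+\cup\mathcal{M}^-_E=\Omega$, because every external line meets $P^\perp$ in a unique point and hence meets exactly one line of $\mathcal{P}^+\cup\mathcal{P}^-$; and (ii) $\mathcal{P}^+\cup\mathcal{M}^-_T$ is a single $G$-orbit on tangents, because each nearby point $Y$ lies on exactly two tangents which fall in different $G$-orbits, so the orbit of a tangent $m\in\mathcal{M}^-_T$ is tied to that of the line $\langle P,Y\rangle\in\mathcal{P}^-$ meeting it, and a cardinality count upgrades ``contained in an orbit'' to ``equals an orbit''. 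The set is then literally one $G$-orbit on externals union one $G$-orbit on tangents, which is a Cossidente--Penttila hemisystem by the result quoted just before the lemma \cite{CossidentePenttila05}. To salvage your direct-verification route you would in effect have to reprove that quoted result; the shorter path is to identify your set with the known orbits as the paper does.
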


\begin{proof}
Let $\mathcal{M}^-_T$ be the tangent lines of $\mathcal{M}^-$ and let $\mathcal{M}^-_E$ be
the external lines of $\mathcal{M}^-$.  We will show that $\mathcal{P}^+\cup
\mathcal{M}^-_T$ is a $G$-orbit on tangents and that $\mathcal{M}^-_E\cup \mathcal{M}^+$
is a $G$-orbit on externals.  Clearly $\Omega=\mathcal{M}^-_E\cup\mathcal{M}^+$ as every
line of $\Omega$ must meet $P^\perp$ and hence is concurrent with some element of
$\mathcal{P}^+$ or $\mathcal{P}^-$. Also, $\mathcal{M}^-_T$ is an orbit of $G_P$ on
tangents. Let $Y$ be a point of $\mathsf{H}(3,q^2)$ not in $\mathcal{E}$ but ``nearby'' to
$\mathcal{E}$, that is, there is some point $X$ of $\mathcal{E}$ such that $Y$ is
collinear with $X$ in $\mathsf{H}(3,q^2)$.  Now there are $q^2(q^2+1)(q+1)/2$ ``nearby''
points in total (by the last paragraph of the proof of \cite[Proposition
  2.2]{CossidentePenttila05}), there are $q^2+1$ points of $\mathcal{E}$ and any point of
$\mathcal{E}$ is collinear to $q^2(q+1)$ other points of $\mathsf{H}(3,q^2)$.  So there
are exactly two lines on $Y$ which are tangent to $\mathcal{E}$, and these two lines are
in different $G$-orbits. Now suppose we have an element $m\in\mathcal{M}_T^-$. Then $m$ is
concurrent with a unique line on $P$ in a point $Y$, say, and the two tangents on $Y$
(namely $m$ and $\langle P,Y\rangle$) are in different $G$-orbits and $\langle
P,Y\rangle\in\mathcal{P}^-$ by the definition of $\mathcal{M}_T^-$. Therefore,
$\mathcal{P}^+\cup \mathcal{M}^-_T$ is contained in a $G$-orbit, but since it has
cardinality half the number of tangents, $\mathcal{P}^+\cup \mathcal{M}^-_T$ is a
$G$-orbit on tangents.  So we see that $\mathcal{P}^+\cup \mathcal{M}^+\cup\mathcal{M}^-$
is the union of a $G$-orbit on externals and a $G$-orbit on tangents, and so it follows
that it is equivalent to a Cossidente-Penttila hemisystem.
\end{proof}

\subsection{The Knarr model is a generalisation of field reduction}\label{KnarrFieldReduction}

One can view the Knarr model of a flock generalised quadrangle as a generalisation of the
field reduction of $\mathsf{H}(3,q^2)$ to $\mathsf{W}(7,q)$, and we will take some time
now to explain how this works. Let $\Phi$ be the map from $\mathsf{H}(3,q^2)$ to
$\mathsf{W}(7,q)$ obtained by identifying $\GF(q^2)$ with $\GF(q)^2$ such that the
four-dimensional vector space over $\GF(q^2)$ becomes an eight-dimensional vector space
over $\GF(q)$.  Let $\xi$ be an element of $\GF(q^2)$ not in $\GF(q)$ whose relative trace
$\mathsf{T}_{q^2\rightarrow q}(\xi)=\xi+\xi^q$ is zero.  Then the following form is
bilinear and alternating over $\GF(q)$:
$$B(u,v) = \mathsf{T}_{q^2\rightarrow q}(\xi \cdot \beta( u,v ))$$ where $\beta$ is the
original Hermitian form over $\GF(q^2)$.  Under this correspondence points go to lines,
lines go to solids, and so forth. Fix a point $P$ of $\mathsf{H}(3,q^2)$ and the $q+1$
Hermitian lines $\mathcal{P}$ incident with $P$. Then under $\Phi$, we obtain a symplectic
line $P^\Phi$ of $\mathsf{W}(7,q)$ and $q+1$ symplectic solids $\mathcal{P}^\Phi$. Now let
$\Pi$ be a hyperplane of $\PG(7,q)$ such that $\Pi^\perp$ is a point on $P^\Phi$. Consider
the following projection map from totally isotropic subspaces of $\mathsf{W}(7,q)$ to
totally isotropic subspaces of the quotient polar space
$\Pi/\Pi^\perp\cong\mathsf{W}(5,q)$:
\begin{equation}\label{rhoequation}
\rho:U\mapsto (\Pi\cap \langle \Pi^\perp, U\rangle)/\Pi^\perp.
\end{equation}
The composition of $\Phi$ and $\rho$ then gives us the Knarr model of $\mathsf{H}(3,q^2)$:
\begin{center}
\begin{table}[H]\footnotesize
\begin{tabular}{l|l}
Objects in $\mathsf{H}(3,q^2)$&Objects in $\mathsf{W}(5,q)$\\ \hline \\ $P$&Point
$P^{\Phi\rho}$ of $\mathsf{W}(5,q)$\\ Lines on $P$&BLT-set of totally isotropic planes on
$P^{\Phi\rho}$\\ Points collinear with $P$& Lines incident with some BLT-set element, not
incident with $P^{\Phi\rho}$\\ Points not collinear with $P$& Points not collinear with
$P^{\Phi\rho}$\\ Lines not on $P$&Planes, not on $P^{\Phi\rho}$, which meet some BLT-set
element in a line\\ \\ \hline
\end{tabular}
\caption{The Knarr model of $\mathsf{H}(3,q^2)$ obtained via field reduction and projection.}
\end{table}
\end{center}

\subsection{The elliptic quadric in $\mathsf{W}(7,q)$}\label{tensor}

It will be useful in understanding and proving Theorem \ref{cphemi2} to geometrically
describe the image of the elliptic quadric $\mathcal{E}$ in $\mathsf{W}(7,q)$.  Let $V$ be
a 4-dimensional vector space over $\GF(q^2)$ with $q$ odd, equipped with a non-degenerate
Hermitian form $\beta$. Let $\{v_1,v_2,v_3,v_4\}$ be a basis for $V$ such that the
determinant of the Gram matrix of $\beta$ with respect to this basis is a nonsquare in
$\GF(q)$. (For example, take an orthonormal basis and multiply the first basis element by
a primitive element of $\GF(q^2)$.)  Let $U$ be the $\GF(q)$-span of
$\{v_1,v_2,v_3,v_4\}$. Then $V=U\otimes \GF(q^2)$ and the restriction $\beta_1$ of $\beta$
to $U$ is a symmetric bilinear form associated with an elliptic quadratic form.  Then
$g\in O^-(4,q)$ acts on $V$ such that $(u\otimes \lambda)^g=u^g\otimes \lambda$ for all
$g\in O^-(4,q)$, $u\in U$ and $\lambda\in\GF(q^2)$.  This action preserves the form
$\beta$ and so we have an embedding of isometry groups $O^-(4,q)\leqslant U(4,q^2)$.

Now recall from Section \ref{KnarrFieldReduction} that $V$ is also an 8-dimensional vector
space over $\GF(q)$ and is equipped with a non-degenerate alternating form $B=
\mathsf{T}_{q^2\rightarrow q}(\xi \cdot\beta)$ where $ \mathsf{T}_{q^2\rightarrow
  q}(\xi)=0$. Now for $u\otimes\lambda,w\otimes\mu\in U\otimes\GF(q^2)$, we have
\begin{align*}
B(u\otimes\lambda,w\otimes\mu)&= \mathsf{T}_{q^2\rightarrow q}(\xi\cdot \beta(u\otimes\lambda,w\otimes\mu))\\
                              &= \mathsf{T}_{q^2\rightarrow q}(\xi\lambda\mu^q\cdot \beta(u\otimes 1,w\otimes1))\\
                             &=\beta_1(u,w) \mathsf{T}_{q^2\rightarrow q}(\xi\lambda\mu^q)
\end{align*}
since $\beta_1(u,w)\in\GF(q)$. Moreover, $\beta_2(\lambda,\mu)= \mathsf{T}_{q^2\rightarrow
  q}(\xi\lambda\mu^q)$ is an alternating form on the 2-dimensional space $W=\GF(q^2)$ as
$\lambda\mu^q$ defines a Hermitian form on $W$. From now on we consider $W$ as a
2-dimensional vector space over $\GF(q)$ such that $V=U\otimes W$. Also, given the
symmetric form $\beta_1$ on $U$ and the alternating form $\beta_2$ on $W$ we have
$$B(u_1\otimes w_1,u_2\otimes w_2)=\beta_1(u_1,u_2)\beta_2(w_1,w_2).$$ Then the central
product $O^-(4,q)\circ\mathsf{Sp}(2,q)$ acts naturally on $V$ via $(u\otimes
w)^{(g,h)}=u^g\otimes w^h$ for all $g\in O^-(4,q),h\in \mathsf{Sp}(2,q),u\in U$ and $w\in
W$, and this action preserves the form $B$.

\subsection{A Knarr model of the Cossidente-Penttila hemisystems}

By the following result, we see how one can obtain the Cossidente-Penttila hemisystems via
Theorem \ref{construction}.  First we summarise in a table the notation which we are using
in this section:

\begin{center}\footnotesize
\begin{tabular}{lp{12cm}}
\hline
Object&Description\\
\hline
$\mathcal{E}$&an elliptic quadric embedded in $\mathsf{H}(3,q^2)$\\
$G$&subgroup $\mathsf{P\Omega}^-(4,q)$ stabilising $\mathcal{E}$\\
$\Omega$& a $G$-orbit on
totally isotropic lines of $\mathsf{H}(3,q^2)$ external to $\mathcal{E}$\\
$\mathcal{P}$&lines on $P$\\
$\mathcal{P}^+$, $\mathcal{P}^-$& $G$-orbits on $\mathcal{P}$\\
$\mathcal{M}^+$&set of lines of $\Omega$ which meet an element of $\mathcal{P}^+$ in a point\\
$\mathcal{M}^-$&set of
lines of $\Omega$ which meet an element of
$\mathcal{P}^-$ in a point, together with the tangent lines which an element of
$\mathcal{P}^-$ in a point\\
$\mathcal{O}$&$\mathcal{P}^{\Phi\rho}$\\
$\mathcal{O}^+$&$(\mathcal{P}^+)^{\Phi\rho}$\\
$\mathcal{O}^-$&$(\mathcal{P}^-)^{\Phi\rho}$\\
\hline
\end{tabular}
\end{center}

\begin{theorem}\label{cphemi2}
Consider the $q^2+1$ points $\mathcal{E}^{\Phi\rho}$ of $\mathsf{W}(5,q)$ obtained by
field reduction to $\mathsf{W}(7,q)$ and projection to $\mathsf{W}(5,q)$ as described
above. Then $\mathcal{E}^{\Phi\rho}$ spans a solid whose image under the null polarity is
a totally isotropic line $\ell$ of $\mathsf{W}(5,q)$.  For a subset $\mathcal{S}$ of the
totally isotropic planes on $\ell$, let $\mathcal{L}^{\pm}_{\mathcal{S}}$ be the totally
isotropic planes of $\mathsf{W}(5,q)$ that meet some element of $\mathcal{O}^{\pm}$ in a
line, and that meet some element of $\mathcal{S}$ in a point.  Let $\mathcal{R}$ be the
set of (t.i.) planes on $\ell$ that meet some element of $(\mathcal{M}^+)^{\Phi\rho}$ in a
point.  Then $\mathcal{R}$ has size $(q-1)/2$,
$(\mathcal{M}^+)^{\Phi\rho}=\mathcal{L}_{\mathcal{R}}^+$ and
$(\mathcal{M}^-)^{\Phi\rho}=\mathcal{L}_{\mathcal{R}^c}^-$.
\end{theorem}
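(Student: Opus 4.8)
The plan is to show that the Knarr-model image of the Cossidente--Penttila construction, as interpreted in Lemma~\ref{lemma:cphemi}, coincides line-for-line with the output of Theorem~\ref{theorem:construction} applied to the data $(\mathcal{O}^+,\mathcal{O}^-,\ell,\mathcal{S}=\mathcal{R})$. First I would establish the geometric claim that $\mathcal{E}^{\Phi\rho}$ spans a solid $\Sigma$ of $\mathsf{W}(5,q)$. This should follow from the tensor description in \S\ref{tensor}: under field reduction the singular points of $\mathcal{E}\cong\mathsf{Q}^-(3,q)$ — vectors of the form $u\otimes w$ with $\beta_1(u,u)=0$ — span the $4$-dimensional $\GF(q)$-subspace $U_0\otimes W$ where $U_0$ is the (radical-free) totally singular part, and one checks that $\rho$ maps this onto a $4$-dimensional subspace of $\Pi/\Pi^\perp$. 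Since $\mathcal{E}$'s defining polarity commutes with the Hermitian one, $\Sigma$ is non-degenerate as a subspace of $\mathsf{W}(5,q)$ (its radical would force a singular point of $\mathcal{E}$ with a forbidden incidence), so $\Sigma^\perp=\ell$ is a line, and totally isotropic because $\Sigma\supseteq\ell$ would contradict non-degeneracy — more precisely $\ell\subseteq\Sigma^\perp$ forces $\beta|_\ell=0$. That $\ell$ is disjoint from every element of $\mathcal{O}$ should come from the fact that no totally isotropic line of $\mathsf{H}(3,q^2)$ on $P$ lies inside $\mathcal{E}$'s polar configuration, translated through $\Phi\rho$.

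Next I would identify the totally isotropic planes on $\ell$ with the $q+1$ points of the line $\Sigma/\ell \cong$ (a conic's worth of points), and more usefully with the $q^2+1$ points of $\mathcal{E}$ via the projection dual to $\rho$: a plane $\pi\supseteq\ell$ corresponds to $\pi\cap\Sigma$, a point, and as $\pi$ ranges over the t.i.\ planes on $\ell$ it hits exactly the points of the conic $\Sigma\cap\mathsf{W}(5,q)$-structure. The key translation is: for $X\notin P^\perp,\ell^\perp$, the plane $X^\pi$ (Construction~\ref{construction}) records which line on $P$ (equivalently, which element of $\mathcal{P}^+\cup\mathcal{P}^-$) the line $\langle P,X\rangle$-preimage of $X$ is concurrent with, lifted back through $\Phi\rho$. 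Then $\{X^\pi\mid\pi\in\mathcal{O}^+\}$ versus $\{X^\pi\mid\pi\in\mathcal{O}^-\}$ is exactly the compatibility condition, which Lemma~\ref{lemma:cphemi} already guarantees holds (it is what makes $\mathcal{P}^+\cup\mathcal{M}^+\cup\mathcal{M}^-$ a hemisystem). So Corollary~\ref{constructioncorollary} applies and the construction output for $\mathcal{S}=\mathcal{R}$ is a hemisystem.

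It then remains to check the three numerical/set-theoretic assertions. For $|\mathcal{R}|=(q-1)/2$: the planes on $\ell$ number $q+1$, one of which is $\langle P,\ell\rangle$; of the remaining $q$, a plane $\pi$ lies in $\mathcal{R}$ iff its corresponding point of $\mathcal{E}$ lies in the ``$\mathcal{P}^+$-half'', and the bookkeeping of Lemma~\ref{lemma:cphemi} (where $\mathcal{M}^-$ absorbs the tangent through $\langle P,\ell\rangle$'s point) forces the split $q = (q-1)/2 + (q+1)/2$, with $\mathcal{R}$ being the smaller part because $P$ itself sits on the $\mathcal{P}^-$ side of the accounting — I would pin this down by tracking the tangent line $\langle P, \mathcal{E}\cap\langle P,\ell\rangle\rangle$. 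The equalities $(\mathcal{M}^+)^{\Phi\rho}=\mathcal{L}^+_{\mathcal{R}}$ and $(\mathcal{M}^-)^{\Phi\rho}=\mathcal{L}^-_{\mathcal{R}^c}$ then follow by unwinding definitions: $m\in\mathcal{M}^+$ iff $m\in\Omega$ meets a unique element of $\mathcal{P}^+$, and applying $\Phi\rho$ turns ``$m\in\Omega$'' into ``t.i.\ plane meeting some $\mathcal{O}^+$-element in a line'' and ``$m$ meets $\mathcal{P}^+$'' into ``meets the corresponding $\mathcal{R}$-plane in a point'' — here I must verify that $\Omega^{\Phi\rho}$ is precisely the set of t.i.\ planes meeting some element of $\mathcal{O}$ in a line and lying over an $\mathcal{R}\cup\mathcal{R}^c$-plane (i.e.\ not over $\langle P,\ell\rangle$), which is where the external-vs-tangent dichotomy of \S\ref{section:cphemi} gets used. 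The main obstacle, I expect, is this last verification: confirming that field reduction plus projection sends externals of $\mathcal{E}$ exactly to the ``generic'' t.i.\ planes and tangents to the ones over $\langle P,\ell\rangle$, and that the $G$-orbit structure on externals matches the $\mathcal{O}^\pm$/$\mathcal{R}$ bipartition cleanly — this requires carefully tracking how $\rho$ interacts with incidence with $\mathcal{E}^{\Phi}$, and is the step where a sign or a square/nonsquare condition could bite.
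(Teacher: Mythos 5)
Your overall plan (establish the solid/line claim via the tensor model of \S\ref{tensor}, then unwind the definitions of $\mathcal{M}^{\pm}$ and finish with a cardinality count) matches the paper's at a high level, but several of your key steps fail as stated. First, the span computation: the singular points of $\mathcal{E}$ span all of $U$, so their images $\langle u'\rangle\otimes W$ span all of $V=U\otimes W$ inside $\PG(7,q)$ --- there is no $4$-dimensional ``$U_0\otimes W$'', since a totally singular subspace of an elliptic form on a $4$-space is at most $1$-dimensional. The collapse to a solid happens only through $\rho$: one must take $\Pi=X^\perp$ with $X=\langle u\otimes w\rangle$ a point of $P^\Phi$, compute $X^\perp\cap(\langle u'\rangle\otimes W)$ case by case ($u'\in u^\perp$ versus $u'\notin u^\perp$), and find that these intersections span $\langle U\otimes w,\,u\otimes W\rangle$, a $5$-dimensional vector space containing $X$ whose quotient by $X$ is the required solid. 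Second, your non-degeneracy claim is backwards: for $\ell=\Sigma^\perp$ to be a \emph{totally isotropic} line one needs $\ell\subseteq\ell^\perp=\Sigma$, i.e.\ $\Sigma$ must be \emph{degenerate} with radical $\ell$ (a non-degenerate solid has a hyperbolic, non-isotropic perp); your ``more precisely $\ell\subseteq\Sigma^\perp$ forces $\beta|_\ell=0$'' is exactly the configuration you just ruled out. Third, the $q+1$ planes on $\ell$ cannot be identified with the $q^2+1$ points of $\mathcal{E}$. The correct picture is that $\mathcal{E}^{\Phi\rho}$ consists of $P^{\Phi\rho}$ together with the $q^2$ points of the single plane $\langle U\otimes w\rangle^\rho$ not on $\ell$, so exactly \emph{two} planes on $\ell$ receive images of tangent lines and the remaining $q-1$ receive images of external lines; $\mathcal{R}$ is half of these $q-1$, rather than the smaller part of a $q=(q-1)/2+(q+1)/2$ split of the planes other than $\langle P,\ell\rangle$ indexed by a ``$\mathcal{P}^+$-half of $\mathcal{E}$''.

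There is also a logical slip in your middle paragraph: you deduce the compatibility condition of Corollary~\ref{constructioncorollary} from the fact that $\mathcal{P}^+\cup\mathcal{M}^+\cup\mathcal{M}^-$ is a hemisystem (Lemma~\ref{lemma:cphemi}). That implication is not available --- the paper's remark after Corollary~\ref{constructioncorollary} explicitly leaves open whether a hemisystem arising from Theorem~\ref{theorem:construction} must satisfy the stronger condition --- and in any case Theorem~\ref{cphemi2} does not assert the hemisystem property. What it asserts are the set identities, which are proved by a direct containment ($m^\Phi$ is a solid meeting $b^\Phi$ in the line $E^\Phi$, so $m^{\Phi\rho}$ meets an element of $\mathcal{O}^+$ in a line and meets the plane $\langle\ell,\ell^\perp\cap m^{\Phi\rho}\rangle\in\mathcal{R}$ in a point, whence $m^{\Phi\rho}\in\mathcal{L}^+_{\mathcal{R}}$) followed by the count $|\mathcal{M}^+|=|\mathcal{L}^+_{\mathcal{R}}|=q^2(q^2-1)/4$. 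Your final paragraph gestures at this, but without the corrected description of how $\Phi\rho$ distributes tangents and externals over the planes on $\ell$, neither the containment nor the count can be completed.
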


\begin{proof}
First we show that $\mathcal{E}^{\Phi\rho}$ spans a solid whose perp is a totally
isotropic line of $\mathsf{W}(5,q)$.  Recall from the discussion in Section \ref{tensor}
that we can write our $8$-dimensional vector space $V$ as $U\otimes W$ where $U$ carries a
symmetric bilinear form of minus type (and is $4$-dimensional) and $W$ carries an
alternating bilinear form (and is $2$-dimensional) such that the ``product'' of these two
forms defines a non-degenerate alternating bilinear form on $V$.  Let $u\in U$ be a
totally singular vector with respect to the elliptic form on $U$. Then $\langle
u\rangle\otimes W$ is a totally isotropic line in $V$ with respect to $B$, and so we will
identify $P^{\Phi\rho}$ with $(\langle u\rangle\otimes W)^\rho$.  Fix $w\in W$ and let
$X=\langle u\otimes w\rangle$. Then $X^{\perp}$ is a hyperplane containing
$u^{\perp}\otimes W$. We can also find an element $\overline{u}\in U$ such that
$\beta(\overline{u},u)\neq 0$.  Then $B(\overline{u}\otimes w,u\otimes
w)=\beta_1(\overline{u},u)\beta_2(w,w)=0$. Hence $X^{\perp}= \langle \overline{u}\otimes
w,u^{\perp}\otimes W\rangle$.

Now we may write the image of our elliptic quadric as 
$$\mathcal{E}^\Phi=\{\langle u'\rangle\otimes W\mid u'\in U,  \beta_1(u',u')=0\}.$$
Then for $\langle u'\rangle\otimes W\in \mathcal{E}$ we have
$$ X^{\perp}\cap (\langle u'\rangle\otimes W)=
\begin{cases}
                        \langle u'\rangle\otimes W & \text{ if }u'\in u^{\perp}\\
                        \langle u'\otimes w\rangle & \text{ if }u'\notin u^\perp 
\end{cases}.$$
The only totally singular point in $u^\perp$ is $u$, so 
$$\langle \{X^\perp \cap E\mid E\in\mathcal{E}^\Phi\}\rangle= \langle u\otimes W,
u'\otimes w\mid u'\notin u^\perp\rangle=\langle U\otimes w,u\otimes W\rangle,$$ which is a
4-dimensional projective space whose perp is a totally isotropic plane of
$\mathsf{W}(7,q)$.  So returning to our model, we can substitute in this argument
$X^\perp$ for the the hyperplane $\Pi$ in the Knarr model (\ref{rhoequation}) and we see
that $\mathcal{E}^{\Phi\rho}$ spans a solid whose perp is a totally isotropic line $\ell$
of $\mathsf{W}(5,q)$.

Now consider the set $\mathcal{R}$ of totally isotropic planes on $\ell$ that meet some
element of $(\mathcal{M}^+)^{\Phi\rho}$ in a point. First note that for every line $m$ of
$\mathsf{H}(3,q^2)$, we have that $m^{\Phi\rho}$ meets $\ell^\perp$ in a point (n.b.,
$\perp$ now denotes the polarity defining $\mathsf{W}(5,q)$).  Moreover, if $m$ is a
tangent line to $\mathcal{E}$, then
$m^{\Phi\rho}\cap\ell^\perp\in\mathcal{E}^{\Phi\rho}$. Now $\mathcal{E}^\Phi$ contains the
line $P^\Phi=\langle u\rangle\otimes W$, and the other lines $\langle u'\rangle\otimes W$
meet the totally isotropic solid $\langle U\otimes w\rangle$ in a point. So
$\mathcal{E}^{\Phi\rho}$ consists precisely of the point $P^{\Phi\rho}$ and the points of
the totally isotropic plane $\langle U\otimes w\rangle^\rho$ which are not on $\ell$. This
plane $\langle U\otimes w\rangle^\rho$ is incident with $\ell$, so we know that just two
of the planes on $\ell$ meet the images of tangent lines under $\Phi\rho$, whereas the
remaining $q-1$ planes meet the images of external lines. It is then not difficult to
ascertain that half of these $q-1$ planes are the elements of $\mathcal{R}$.

By definition, $\mathcal{O}^+$ is the image of $\mathcal{P}^+$ under $\Phi\rho$.  Let
$m\in\mathcal{M}^+$, that is, $m\in\Omega$ and $m$ meets some element of
$b\in\mathcal{P}^+$ in a point $E$.  Then $m^\Phi$ is a solid meeting the solid $b^\Phi$
in the line $E^\Phi$. So $m^{\Phi\rho}$ is a plane meeting an element of $\mathcal{O}^+$
in $E^{\Phi\rho}$, the latter being a line as $E^\Phi\le X^\perp$. Now $m^{\Phi\rho}$
meets $\ell^\perp$ in a point, and so there is a unique totally isotropic plane on $\ell$
which meets $m^{\Phi\rho}$ in a point, namely $\langle \ell, \ell^\perp\cap
m^{\Phi\rho}\rangle$.  By definition, this plane must be an element of $\mathcal{R}$, and
hence, $m^{\Phi\rho}\in \mathcal{L}_{\mathcal{R}}^+$.  It is not difficult to show that
the cardinalities of $\mathcal{M}^+$ and $\mathcal{L}_{\mathcal{R}}^+$ are equal to
$q^2(q^2-1)/4$ and hence $(\mathcal{M}^+)^{\Phi\rho}=\mathcal{L}_{\mathcal{R}}^+$.  By a
similar argument, we have that $(\mathcal{M}^-)^{\Phi\rho}=\mathcal{L}_{\mathcal{R}^c}^-$.
\end{proof}

\begin{corollary}
The Cossidente-Penttila hemisystem of $\mathsf{H}(3,q^2)$ can be described as
$\mathcal{O}^+\cup \mathcal{L}_{\mathcal{R}}^+\cup \mathcal{L}_{\mathcal{R}^c}^-$ in the
Knarr model $\mathcal{K}(\mathcal{O})$ of $\mathsf{H}(3,q^2)$, that is, the
Cossidente-Penttila hemisystems occur as examples of output of our Construction
\ref{construction}.
\end{corollary}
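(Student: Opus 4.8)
The plan is to assemble the corollary from the three results that precede it: Lemma~\ref{lemma:cphemi}, which exhibits a Cossidente-Penttila hemisystem of $\mathsf{H}(3,q^2)$ in the explicit form $\mathcal{P}^+\cup\mathcal{M}^+\cup\mathcal{M}^-$; the map $\Phi\rho$ of Section~\ref{KnarrFieldReduction}, which realises $\mathsf{H}(3,q^2)$ as the Knarr model $\mathcal{K}(\mathcal{O})$ with $\mathcal{O}=\mathcal{P}^{\Phi\rho}$ and $\mathcal{O}^{\pm}=(\mathcal{P}^{\pm})^{\Phi\rho}$; and Theorem~\ref{cphemi2}, which computes the images of $\mathcal{M}^+$ and $\mathcal{M}^-$ under $\Phi\rho$. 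First I would recall, via the translation table in Section~\ref{KnarrFieldReduction}, that $\Phi\rho$ is an isomorphism of generalised quadrangles from $\mathsf{H}(3,q^2)$ onto $\mathcal{K}(\mathcal{O})$, carrying the lines on $P$ bijectively to the BLT-set $\mathcal{O}$; since an isomorphism of generalised quadrangles sends a hemisystem to a hemisystem, the image of $\mathcal{P}^+\cup\mathcal{M}^+\cup\mathcal{M}^-$ under $\Phi\rho$ is a hemisystem of $\mathcal{K}(\mathcal{O})$, and its first part is $(\mathcal{P}^+)^{\Phi\rho}=\mathcal{O}^+$ by definition.

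Next I would invoke Theorem~\ref{cphemi2} directly. It supplies the totally isotropic line $\ell$ (the perp of the solid spanned by $\mathcal{E}^{\Phi\rho}$), a set $\mathcal{R}$ of totally isotropic planes on $\ell$ with $|\mathcal{R}|=(q-1)/2$, and the equalities $(\mathcal{M}^+)^{\Phi\rho}=\mathcal{L}^+_{\mathcal{R}}$ and $(\mathcal{M}^-)^{\Phi\rho}=\mathcal{L}^-_{\mathcal{R}^c}$. Substituting these into the image computed in the previous step yields that the Cossidente-Penttila hemisystem, transported to $\mathcal{K}(\mathcal{O})$, is exactly $\mathcal{O}^+\cup\mathcal{L}^+_{\mathcal{R}}\cup\mathcal{L}^-_{\mathcal{R}^c}$, which is precisely the shape of the output of Construction~\ref{construction} for the partition $\{\mathcal{O}^+,\mathcal{O}^-\}$, the line $\ell$, and the choice $\mathcal{S}=\mathcal{R}$. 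One small compatibility point to record is that $\mathcal{R}$ does not contain the plane $\langle P,\ell\rangle$, as required in Theorem~\ref{theorem:construction}: by the proof of Theorem~\ref{cphemi2} the members of $\mathcal{R}$ meet images of lines external to $\mathcal{E}$, whereas $\langle P,\ell\rangle$ is one of the two planes on $\ell$ meeting images of tangent lines.

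Finally, since the displayed set is genuinely a hemisystem of $\mathcal{K}(\mathcal{O})$, the ``only if'' direction of Theorem~\ref{theorem:construction} shows that the regularity condition there is automatically satisfied by this data, so no further verification is needed and the Cossidente-Penttila hemisystem is literally an instance of Construction~\ref{construction}. I do not expect a genuine obstacle here: the substantive work — the tensor-product description of $\mathcal{E}^\Phi$ and the count pinning down $|\mathcal{R}|=(q-1)/2$ — is already carried out in Theorem~\ref{cphemi2}, and Lemma~\ref{lemma:cphemi} already casts the original construction in the right form; the only thing requiring care is lining up the identifications $\mathcal{O}=\mathcal{P}^{\Phi\rho}$, $\mathcal{O}^{\pm}=(\mathcal{P}^{\pm})^{\Phi\rho}$ and $\mathcal{S}=\mathcal{R}$ with the hypotheses of Theorem~\ref{theorem:construction} exactly as stated.
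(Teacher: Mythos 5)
Your assembly is correct and is essentially the argument the paper intends (the corollary is stated without a separate proof precisely because it follows by transporting the hemisystem of Lemma~\ref{lemma:cphemi} through $\Phi\rho$ and substituting the identifications $(\mathcal{M}^{\pm})^{\Phi\rho}=\mathcal{L}^{+}_{\mathcal{R}},\mathcal{L}^{-}_{\mathcal{R}^c}$ from Theorem~\ref{cphemi2}). Your added checks --- that $\mathcal{R}$ avoids $\langle P,\ell\rangle$ and that the regularity condition of Theorem~\ref{theorem:construction} is then automatic --- are sound and merely make explicit what the paper leaves implicit.
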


\newpage

%
%

\section{Appendix: A proof that a hemisystem yields a partial quadrangle}

Let $\mathcal{Q}$ be a generalised quadrangle of order $(s,s^2)$, $s>1$ odd.  If
$\mathcal{P}$ is the set of points of $\mathcal{Q}$, then the algebra
$\mathbb{C}\mathcal{P}$ of functions from $\mathcal{P}$ to $\mathbb{C}$ decomposes into an
orthogonal decomposition of three irreducible subspaces $V^0$, $V^+$ and $V^-$, which are
in fact eigenspaces for the adjacency matrix of $\mathcal{Q}$ with eigenvalues $s(s^2+1)$,
$s-1$ and $-s^2-1$ respectively.  Moreover, $V^0$ is just the subspace of constant
functions from $\mathcal{P}$ to $\mathbb{C}$, which is spanned by the all-one function
$j$.  We will be considering multisets of elements of $\mathcal{P}$, that is, functions
from $\mathcal{P}$ to the non-negative integers $\mathbb{N}_0$.  To make our notation
simpler, we will avoid using the symbol ``chi'' and a subscript for the characteristic
function of a multiset, rather we will simply use the multiset itself if it is clear that
we are using it as an element of $\mathbb{C}\mathcal{P}$.

A \textit{weighted tight set} is any multiset $v$ of $\mathbb{N}_0\mathcal{P}$ such that
$v\in V^0\bigoplus V^+$. Similarly, a \textit{weighted $m$-ovoid} is any multiset residing
in $V^0\bigoplus V^-$.  An example of a weighted $m$-ovoid is a \textit{hemisystem of
  points} of a generalised quadrangle; the dual notion of a hemisystem of lines (see
\cite[Lemma 1]{BKLP07}).  The scalar product $A\scalarproduct B$ of two multisets $A$ and
$B$ gives us the function $i\mapsto A(i)B(i)$, which one can think of as the
generalisation of the intersection of two sets. Therefore, we write $|A|$ for
$A\scalarproduct j$. So suppose we have a weighted tight set $v^+$ and a weighted
$m$-ovoid $v^-$. Then by orthogonal projection, we see that
$$
v^+-\frac{|v^+|}{|\mathcal{P}|}j\in V^+\quad\text{ and }\quad
v^--\frac{|v^-|}{|\mathcal{P}|}j\in V^-
$$ and hence these two vectors are orthogonal to one another. This observation then
implies that $v^+$ and $v^-$ meet in a number of elements determined by their sizes:
$$v^+\scalarproduct v^- = \frac{|v^+||v^-|}{|\mathcal{P}|}.$$ (Note that $|\mathcal{P}|$ is simply
the number $(s+1)(s^3+1)$ of points of $\mathcal{Q}$).

\begin{lemma}
Let $x$ and $y$ be two noncollinear points of $\mathcal{Q}$ and consider the multiset
$\mathcal{T}$ of points consisting of $\{x,y\}^\perp$ and $s$ copies of $x$ and $y$. Then
$\mathcal{T}$ is a weighted tight set.
\end{lemma}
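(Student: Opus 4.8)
The plan is to show that $\mathcal{T}$ lies in $V^0\bigoplus V^+$, which by definition means it is a weighted tight set; equivalently, since $V^- = (V^0\bigoplus V^+)^\perp$, it suffices to show that $\mathcal{T}$ is orthogonal to $V^-$. Rather than decompose $V^-$ explicitly, I would use the characterisation of $V^-$ as the eigenspace of the adjacency matrix $A$ of $\mathcal{Q}$ with eigenvalue $-s^2-1$: a vector $f$ with $f - \frac{|f|}{|\mathcal{P}|}j$ lying entirely in $V^+$ is the same as saying $\mathcal{T}$ projects trivially onto $V^-$. So concretely, I would verify that for every point $z$ of $\mathcal{Q}$, the quantity $(A\mathcal{T})(z)$ agrees with what an element of $V^0\bigoplus V^+$ must give — i.e.\ compute $A\mathcal{T}$ and check it is a linear combination of $\mathcal{T}$ and $j$, using that $A$ restricted to $V^0$ is multiplication by $s(s^2+1)$ and to $V^+$ by $s-1$. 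If $\mathcal{T} = a j + t^+$ with $t^+\in V^+$, then $A\mathcal{T} = a\,s(s^2+1)\,j + (s-1)t^+ = (s-1)\mathcal{T} + a\bigl(s(s^2+1)-(s-1)\bigr)j$, so the concrete goal is to show $A\mathcal{T} - (s-1)\mathcal{T}$ is a constant multiple of $j$.

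The computation of $A\mathcal{T}$ breaks into three contributions, one from each piece of the multiset $\mathcal{T} = \{x,y\}^\perp + s\cdot x + s\cdot y$. First, $(A x)(z)$ counts the points collinear with $z$ that equal $x$, i.e.\ it is $1$ if $z\sim x$ and $0$ otherwise (recalling collinearity is irreflexive here); similarly for $y$. The subtle part is $(A\,\{x,y\}^\perp)(z) = |\{x,y\}^\perp \cap z^\perp|$, the number of points of the trace collinear with $z$. I would handle this by cases according to the position of $z$ relative to $x$ and $y$: whether $z$ is one of $x,y$; whether $z\in\{x,y\}^\perp$; whether $z$ is collinear with exactly one of $x,y$; or collinear with neither and outside the trace. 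Here I invoke the regularity properties of $\mathcal{Q}$ — in a GQ of order $(s,s^2)$ the trace $\{x,y\}^\perp$ of two noncollinear points has size $s^2+1$, and standard counting (as in \cite{FGQ}, e.g.\ the spans and the fact that noncollinear point-pairs are regular or the relevant incidence counts) pins down $|\{x,y\}^\perp\cap z^\perp|$ in each case. Assembling the three contributions, I expect that in every case $(A\mathcal{T})(z) - (s-1)\mathcal{T}(z)$ comes out to the same constant, namely $s^2+1$ (which matches $a\bigl(s(s^2+1)-(s-1)\bigr)/$… after identifying $a$ from $|\mathcal{T}| = (s^2+1) + 2s$).

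The main obstacle is the case analysis for $|\{x,y\}^\perp\cap z^\perp|$, and in particular getting the right count when $z$ is collinear with exactly one of $x$ and $y$, and when $z$ is a "generic" point collinear with neither and not in the trace — these require the deeper regularity/counting lemmas for GQs of order $(s,s^2)$ rather than just the axioms. An alternative, possibly cleaner route that avoids $z$-by-$z$ bookkeeping: use the scalar-product identity derived just above in the excerpt. Since hemisystems of points are weighted $m$-ovoids, and more generally any weighted $m$-ovoid $v^-$ satisfies $v^+ \scalarproduct v^- = |v^+||v^-|/|\mathcal{P}|$ exactly when $v^+$ is a weighted tight set, one can try to characterise weighted tight sets by a fixed intersection number with a spanning family of $m$-ovoids (the "classical" ovoids or their weighted analogues), and check $\mathcal{T}$ meets each in the predicted count; the computation $\mathcal{T}\scalarproduct v^- = |\{x,y\}^\perp\cap v^-| + s\,v^-(x) + s\,v^-(y)$ then reduces to the known behaviour of $m$-ovoids on traces. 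I would present the direct eigenvalue computation as the main line and mention this as the conceptual reason it works.
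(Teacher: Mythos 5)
Your plan is essentially the paper's own proof: the paper likewise computes $\mathcal{T}A$ and shows it equals $(s+1)j+(s-1)\mathcal{T}$, with your case analysis for $|\{x,y\}^\perp\cap z^\perp|$ packaged into a single identity for $\{x,y\}^\perp A$ and resting on exactly the counting fact you point to, namely that every triad of pairwise noncollinear points in a generalised quadrangle of order $(s,s^2)$ has precisely $s+1$ centres (Bose--Shrikhande). One small correction: the constant value of $(A\mathcal{T})(z)-(s-1)\mathcal{T}(z)$ is $s+1$, not $s^2+1$; your own formula $a\bigl(s(s^2+1)-(s-1)\bigr)$ with $a=|\mathcal{T}|/|\mathcal{P}|=(s+1)/(s^3+1)$ evaluates to $s+1$.
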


\begin{proof}
Consider the function $v:=(s^3+1)\mathcal{T}-(s+1)j$.  We will show that $v$ is an
eigenvector of the adjacency matrix $A$ with eigenvalue $s-1$.  Now we collect a few
formulae which are simple to demonstrate with geometric arguments, plus the fact that the
size of $\{x,y,z\}^{\perp}$ is $s+1$, where $x,y,z$ are pairwise noncollinear (see
\cite{BoseShrikhande72}):
\begin{align*}
jA&=s(s^2+1)j,\\
{\{x,y\}^\perp}A&=(s+1)j-s\cdot( {x^\perp}+{y^\perp})+(s-1)\cdot {\{x,y\}^\perp}+s^2\cdot {\{x,y\}},\\
{\{x,y\}}A&={x^\perp}+{y^\perp}-{\{x,y\}}.
\end{align*}
So
\begin{align*}
\mathcal{T}A&={\{x,y\}^\perp}A+s\cdot (xA)+s\cdot(yA)\\
&=(s+1)j+(s-1){\{x,y\}^\perp}+(s^2-s){\{x,y\}}\\
&=(s+1)j+(s-1)\mathcal{T}
\end{align*}
It then follows that $v$ is an eigenvector of the adjacency matrix $A$ with eigenvalue $s-1$.
\end{proof}

\begin{theorem}
Let $\mathcal{H}$ be a set of points of $\mathcal{Q}$ giving rise to a hemisystem of the
dual of $\mathcal{Q}$. Then the geometry $\Gamma$ obtained by restricting the point-set of
$\mathcal{Q}$ to $\mathcal{H}$ is a partial quadrangle.
\end{theorem}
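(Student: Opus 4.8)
The plan is to verify the three defining axioms of a partial quadrangle directly for $\Gamma$: that two points lie on at most one line, that there are no triangles, that every line has a constant number of points and every point a constant number of lines, and finally that there is a constant $\mu$ such that any two noncollinear points of $\Gamma$ have exactly $\mu$ common neighbours in $\Gamma$. The first two are immediate: $\Gamma$ is an induced substructure of $\mathcal{Q}$, so the ``at most one line through two points'' and ``no triangles'' conditions are inherited from $\mathcal{Q}$ (any triangle in $\Gamma$ would be a triangle in $\mathcal{Q}$). For the regularity of line and point degrees, I would recall that $\mathcal{H}$, viewed dually, is a hemisystem of lines, so every point of $\mathcal{Q}$ lies on exactly $(s^2+1)/2$ lines of the hemisystem; translating back, every line of $\mathcal{Q}$ meets $\mathcal{H}$ in exactly $(s+1)/2$ points, and every point of $\mathcal{H}$ lies on all $s^2+1$ lines through it in $\mathcal{Q}$ (the lines of $\Gamma$ being the restrictions of lines of $\mathcal{Q}$ that meet $\mathcal{H}$, which is all of them since $(s+1)/2 \ge 1$). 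So $\Gamma$ has lines of size $(s-1)/2 + 1$ and points of degree $s^2 + 1$, matching the claimed parameters $\mathsf{PQ}((s-1)/2, s^2, (s-1)^2/2)$.

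The real content is the $\mu$-condition, and here I would use the weighted tight set / weighted $m$-ovoid machinery just set up. Fix two noncollinear points $x, y \in \mathcal{H}$. The number of common neighbours of $x$ and $y$ inside $\Gamma$ is $|\mathcal{H} \cap \{x,y\}^\perp| = \mathcal{H} \scalarproduct \{x,y\}^\perp$. By the preceding lemma, the multiset $\mathcal{T}$ consisting of $\{x,y\}^\perp$ together with $s$ copies each of $x$ and $y$ is a weighted tight set, and $\mathcal{H}$ (as a hemisystem of points) is a weighted $m$-ovoid. The orthogonality computation displayed above then gives
$$\mathcal{T} \scalarproduct \mathcal{H} = \frac{|\mathcal{T}|\,|\mathcal{H}|}{|\mathcal{P}|}.$$
Now $|\mathcal{T}| = |\{x,y\}^\perp| + 2s = (s+1) + 2s = 3s+1$ (using $|\{x,y\}^\perp| = s+1$ since the trace of two noncollinear points in a GQ of order $(s,s^2)$ has $s+1$ points), $|\mathcal{H}| = \tfrac{1}{2}(s+1)(s^3+1)$ since $\mathcal{H}$ is a hemisystem of points, and $|\mathcal{P}| = (s+1)(s^3+1)$. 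On the other hand, since $x, y \in \mathcal{H}$, we have $\mathcal{T} \scalarproduct \mathcal{H} = \mathcal{H} \scalarproduct \{x,y\}^\perp + s\,\mathcal{H}(x) + s\,\mathcal{H}(y) = |\mathcal{H} \cap \{x,y\}^\perp| + 2s$. Substituting and solving for $|\mathcal{H} \cap \{x,y\}^\perp|$ yields a value independent of the chosen pair $x,y$; arithmetic should give $\mu = (s-1)^2/2$, which is exactly the common-neighbour count required, and the result follows.

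The step I expect to be the main obstacle — or at least the one needing the most care — is making sure the weighted $m$-ovoid hypothesis applies to $\mathcal{H}$ in the form needed: one must confirm that a hemisystem of lines of $\mathcal{Q}$ corresponds, under the point-line duality together with passing to the dual GQ of order $(s^2, s)$, to a hemisystem of points $\mathcal{H}$ of $\mathcal{Q}$, and hence to a genuine weighted $m$-ovoid lying in $V^0 \oplus V^-$; this is the content of the remark citing \cite[Lemma 1]{BKLP07}, so it can be quoted, but the bookkeeping between $\mathcal{Q}$ and its dual, and the sign/eigenspace conventions, should be stated cleanly. Everything else is a short eigenvalue verification (already done in the lemma) plus the arithmetic of plugging into the orthogonality identity.
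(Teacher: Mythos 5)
Your overall strategy is exactly the paper's: dispose of the at-most-one-line and no-triangle axioms as inherited from $\mathcal{Q}$, note the degree regularity coming from the hemisystem property, and then establish the $\mu$-condition by pairing the weighted tight set $\mathcal{T}=\{x,y\}^\perp+s(\{x\}+\{y\})$ against the weighted $m$-ovoid $\mathcal{H}$ via the orthogonality identity $\mathcal{T}\scalarproduct\mathcal{H}=|\mathcal{T}|\,|\mathcal{H}|/|\mathcal{P}|$ and solving for $\mu$. That is the right plan and the right machinery.

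There is, however, a concrete numerical error that breaks your final computation: you assert $|\{x,y\}^\perp|=s+1$, but in a generalised quadrangle of order $(s,t)$ the trace of two noncollinear points has $t+1$ elements, and here $\mathcal{Q}$ has order $(s,s^2)$, so $|\{x,y\}^\perp|=s^2+1$ (the paper states this explicitly at the start of its proof). You have most likely confused it with the triple trace $|\{x,y,z\}^\perp|=s+1$ used in the preceding lemma, or with the pair trace in the dual quadrangle of order $(s^2,s)$. With your value $|\mathcal{T}|=3s+1$ the identity yields $\mathcal{T}\scalarproduct\mathcal{H}=(3s+1)/2$ and hence $\mu=(3s+1)/2-2s=(1-s)/2<0$, which is absurd, so the arithmetic does not in fact produce $(s-1)^2/2$ as you anticipate. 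With the correct value $|\mathcal{T}|=(s^2+1)+2s=(s+1)^2$ one gets $\mathcal{T}\scalarproduct\mathcal{H}=(s+1)^2/2$ and $\mu=(s+1)^2/2-2s=(s-1)^2/2$, exactly as required. Everything else in your argument, including the duality bookkeeping identifying $\mathcal{H}$ with a hemisystem of points and hence a weighted $m$-ovoid, is sound and matches the paper.
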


\begin{proof}
Clearly every two points of $\Gamma$ share at most one line and there are no triangles in
$\Gamma$, so it suffices to prove the ``$\mu$ condition''. Let $x,y$ be two arbitrary
noncollinear points in $\mathcal{H}$ and let $\mu$ be the number of points of
$\mathcal{H}$ which are collinear to both $x$ and $y$.  Note that $x^\perp\cap y^\perp$
consists of $s^2+1$ points in $\mathcal{Q}$.  We simply use the above lemma to show that
$\mu$ is independent of the choice of $x$ and $y$.  Consider the multiset $\mathcal{T}$ of
points consisting of $\{x,y\}^\perp$ and $s$ copies of $x$ and $y$.  So in other words,
this multiset can be expressed by $\mathcal{T}={\{x,y\}^\perp}+s(\{x\}+\{y\})$.  Simply
note now that
$$
\frac{|\mathcal{T}||\mathcal{H}|}{(s+1)(s^3+1)}=\mathcal{T}\scalarproduct \mathcal{H}
={\{x,y\}^\perp}\scalarproduct\mathcal{H}+s(\{x\}\scalarproduct\mathcal{H})+s(\{y\}\scalarproduct\mathcal{H})\\
=\mu+2s
$$
and hence $\mu=\tfrac{1}{2}((s^2+1)+2s)-2s=(s-1)^2/2$.
Therefore $\Gamma$ is a partial quadrangle.
\end{proof}

\address{
   \noindent J. Bamberg, M. Giudici and G. F. Royle\\
   School of Mathematics and Statistics,\\
   The University of Western Australia,\\
   35 Stirling Highway, Crawley, W.A. 6009\\
  Australia\\
   \email{bamberg@maths.uwa.edu.au\\
   giudici@maths.uwa.edu.au\\
   gordon@maths.uwa.edu.au}}


\end{document}